\newtheorem{thm}{Theorem}[section]
\newtheorem*{theorem*}{Theorem}
\newtheorem*{acknowledgement*}{Acknowledgement}
\newtheorem{corollary}[thm]{Corollary}
\newtheorem{lemma}[thm]{Lemma}
\newtheorem{proposition}[thm]{Proposition}
\theoremstyle{definition}
\theoremstyle{remark}
\newtheorem{remark}[thm]{Remark}
\newtheorem{conjecture}[thm]{Conjecture}
\numberwithin{equation}{section}
\newcommand\de{{\partial}} 
\newcommand{\abs}[1]{\left\vert#1\right\vert}
\newcommand{\R}{\mathbb{R}}
\DeclareMathOperator{\Vol}{Vol}
\DeclareMathOperator{\Diam}{Diam}
\DeclareMathOperator{\Area}{Area}
\DeclareMathOperator{\MinA}{MinA}
\def\XXint#1#2#3{{\setbox0=\hbox{$#1{#2#3}{\int}$ }
\vcenter{\hbox{$#2#3$ }}\kern-.6\wd0}}
\title{Warped Tori with Almost Non-negative Scalar Curvature}
\author{Brian Allen}
\address[Brian Allen]{Lehman College, CUNY, 250 Bedford Park Blvd W, Bronx, NY 10468}
\email{Brian.Allen1@lehman.cuny.edu}
\author{Lisandra Hernandez-Vazquez}
\address[Lisandra Hernandez-Vazquez]{Stony Brook University, 100 Nicolls Road, Stony Brook, NY 11794, USA.}
\email{lisandra.hernandezvazquez@stonybrook.edu}
\author{Davide Parise}
\address[Davide Parise]{University of California San Diego, 9500 Gilman Drive \# 0112, La Jolla, CA  92093-0112, USA.}
\email{dado.parise@gmail.com}
\author{Alec Payne}
\address[Alec Payne]{Duke University, 120 Science Drive, Room 029B, Durham, NC 27701.}
\email{alec.payne@duke.edu}
\author{Shengwen Wang}
\address[Shengwen Wang]{Queen Mary University of London, Mile End Road, E1 4NS, London, UK.}
\email{shengwen.wang@qmul.ac.uk}
\begin{document}
\date{\today}
\begin{abstract}
For sequences of warped product metrics on a $3$-torus satisfying the scalar curvature bound $R_j \geq -\frac{1}{j}$, uniform upper volume and diameter bounds, and a uniform lower area bound on the smallest minimal surface, we find a subsequence which converges in both the Gromov-Hausdorff (GH) and the Sormani-Wenger Intrinsic Flat (SWIF) sense to a flat $3$-torus. 
\end{abstract}
\maketitle
\section*{Introduction} 

The Scalar Torus Rigidity Theorem states that any Riemannian manifold which is diffeomorphic to an $n$-dimensional torus and which has nonnegative scalar curvature is isometric to a flat torus. 
It is called a rigidity theorem because it is a theorem which forces a
Riemannian manifold to have a rigid structure: in this case to be isometric to a flat torus.
This theorem was proven for dimension $n=3$ by Schoen and Yau in 1979 \cite{Schoen-Yau-min-surf}, using results from minimal surface theory that can now be extended to higher dimensions.   Gromov and Lawson gave a proof in all dimensions using the Lichnerowicz formula in \cite{Gromov-Lawson-1980}.  

Recently, Gromov suggested that sequences of manifolds diffeomorphic to tori with almost non-negative scalar curvature and appropriate compactness conditions should converge to flat tori  \cite{Gromov-Dirac}. By work of Gromov \cite{Gromov-Dirac} and of Bamler \cite{Bamler-16}, if one assumes additional conditions on the metric tensors to guarantee that they converge in the $C^0$ sense then one can obtain $C^0$ convergence of this sequence of tori with almost non-negative scalar curvature to flat tori. Since there are known examples of sequences without these additional hypotheses which do not converge in the $C^0$ or even Gromov-Hausdorff (GH) sense it was suggested by Gromov that the conjecture should be in terms of Sormani-Wenger Intrinsic Flat (SWIF) convergence. In \cite{Sormani-scalar}, Sormani formulated a precise conjecture for such a sequence of tori with almost non-negative scalar curvature as follows. 
\begin{conjecture}\label{MainConjecture}
Let $M_j = (\mathbb{T}^3, g_j)$ be a sequence of Riemannian manifolds diffeomorphic to a $3$-torus such that 
\begin{align} \label{HypothesisConjecture}
R_{j} \ge -\frac{1}{j}, \hspace{0.5cm} \Vol(M_j) \le V_0, \hspace{0.5cm} \Diam(M_j) \le D_0 \,\,\, \text{and} \,\,\, \MinA(M_j) \ge A_0 > 0,
\end{align}
where $R_j$ is the scalar curvature and $\MinA(M_j)$ is the area of the smallest closed minimal surface in $M_j$. 
Then, there is a subsequence of $M_j$ converging in the SWIF sense to a flat torus: $M_{j_k} \stackrel {SWIF}{\longrightarrow} M_{\infty}$, where $M_\infty$ is a flat torus. 
\end{conjecture}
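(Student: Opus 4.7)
The plan is to combine Schoen--Yau stable minimal surface techniques with Sormani--Wenger intrinsic flat compactness, exploiting the nontrivial $H_2(\mathbb{T}^3;\mathbb{Z})$ to produce in each $M_j$ a sweep-out by almost-flat, almost-area-minimizing $2$-tori, thereby reducing the general conjecture to a global almost-warped-product picture which can then be fed into the convergence theorem for genuine warped products developed in this paper.

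First, for each $j$ and each primitive class $\alpha\in H_2(\mathbb{T}^3;\mathbb{Z})$, I would extract a stable area-minimizing representative $\Sigma_j^\alpha$; by the $\MinA$ hypothesis $\Area(\Sigma_j^\alpha)\ge A_0$, so the slices cannot degenerate. The stability inequality with test function $\phi\equiv 1$, combined with the Gauss equation, the hypothesis $R_j\ge -1/j$, and the vanishing of $\int_{\Sigma_j^\alpha}K_{\Sigma_j^\alpha}\,d\mu$ from Gauss--Bonnet on a torus, forces
\[
\int_{\Sigma_j^\alpha}|A|^2\,d\mu \ \le\ \frac{C}{j},
\]
so each $\Sigma_j^\alpha$ is almost totally geodesic and its induced metric is integrally close to flat. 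Next, using a normal sweep-out initiated at $\Sigma_j^\alpha$ (or Almgren--Pitts min-max in the same homology class), I would build a continuous family $\{\Sigma_j^{\alpha,t}\}_{t\in S^1}$ of nearly minimizing tori; a parameter-dependent version of the above stability estimate should propagate the almost-flatness to every slice, yielding an almost warped product $dt^2+h_j^t$ in a tubular neighbourhood, with each $h_j^t$ close to a flat $2$-torus metric. Running this construction for three transverse generators of $H_2$ and reconciling the resulting coordinates produces an almost-flat $3$-chart covering most of $M_j$. Finally, Wenger's compactness theorem together with the uniform volume and diameter bounds extracts a SWIF subsequential limit $M_\infty$; the uniform $\MinA$ bound, passed to the limit on every homologically nontrivial cycle, rules out cancellation and collapse, and a Volume-Above-Distance-Below type argument applied to the almost-warped chart identifies $M_\infty$ as a flat $3$-torus.

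The principal obstacles lie in the foliation step. Without a Ricci lower bound one cannot exclude focal points of the normal exponential map along $\Sigma_j^\alpha$, min-max slices in dimension three can develop isolated singular points, and, most delicately, the three sweep-outs associated to different generators of $H_2(\mathbb{T}^3;\mathbb{Z})$ need not be mutually almost-orthogonal, so assembling them into a single almost-flat chart rather than three mutually incompatible ones is where the key new ideas are required. A secondary but serious difficulty is that SWIF limits under only volume, diameter and $\MinA$ control could a priori be lower-dimensional or singular; ruling this out appears to require a quantitative $L^1$ or $W^{1,p}$ comparison of $g_j$ with a flat reference metric, which is presently available only in special settings such as the warped product case. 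For these reasons the full conjecture remains open, and the present paper establishes it in the warped product setting, where both obstacles disappear.
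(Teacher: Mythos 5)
You are attempting Conjecture \ref{MainConjecture}, which the paper states as an open problem and does not prove: the paper only establishes the two warped-product special cases (Theorems \ref{MainThmCase1} and \ref{MainThmCase2}), and it does so by direct ODE/PDE estimates on the warping functions rather than by minimal-surface foliations. Your text is, by your own admission, a program rather than a proof, and as written it does not close the conjecture. The first concrete gap is the foliation step. The stability inequality with $\phi\equiv 1$, the Gauss equation, and Gauss--Bonnet do give $\int_{\Sigma_j^\alpha}|A|^2\,d\mu\le \tfrac{1}{2j}\Area(\Sigma_j^\alpha)$, but to conclude a bound of the form $C/j$ you already need a $j$-independent upper bound on the area of the minimizer, which does not follow from the volume and diameter bounds alone without an additional coarea or sweep-out argument. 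More seriously, there is no mechanism to propagate almost-total-geodesy from one stable leaf to a sweep-out: level sets of the normal exponential map are not minimal, focal points can occur at arbitrarily small scale absent any Ricci or sectional bound, and min-max slices need not be close to the minimizer. The Basilio--Sormani examples cited in the paper satisfy all four hypotheses and contain arbitrarily deep thin wells, so no almost-flat chart can cover $M_j$ up to small Gromov--Hausdorff error; at best one could hope for a chart covering $M_j$ up to a set of small volume, and then the reduction to the warped-product theorems of this paper is no longer literal, since those theorems require the metric to be globally of the form $a^2dx^2+b^2dy^2+dz^2$ or $dx^2+dy^2+f^2dz^2$.

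The second gap is in identifying the limit. Wenger's compactness theorem does give a SWIF-convergent subsequence under the stated volume and diameter bounds, but the limit could a priori be the zero integral current space: the hypotheses include only an \emph{upper} volume bound, and $\MinA$ is not known to be lower semicontinuous under SWIF convergence, so ``passing the $\MinA$ bound to the limit'' is not a step you can currently take. Ruling out collapse and cancellation and then showing the limit is a smooth flat torus rather than a lower-dimensional or singular space is precisely the content of the conjecture. A Volume-Above-Distance-Below type argument requires exactly the kind of uniform two-sided comparison with a fixed flat background metric that the paper obtains only in the warped-product setting (Proposition \ref{UpperLowerBoundsDoubly}, Proposition \ref{LinfinityEsthj}, Corollary \ref{C0Boundh}). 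So your proposal correctly locates the difficulties but does not resolve them; the conjecture remains open, and the one piece of your sketch that is essentially complete, the estimate $\int_{\Sigma_j^\alpha}|A|^2\,d\mu = O(1/j)$ for stable minimizing tori, is a reasonable first lemma for a future attack but is neither used in nor sufficient to recover the results actually proved in the paper.
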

 Note that if any of the assumed conditions on the sequence in this conjecture are relaxed, then there are known counterexamples. The uniform volume and diameter bounds are necessary to prevent expansion and collapsing. The $\MinA$ condition is necessary to prevent bubbling and \textquotedblleft sewing" examples which would otherwise provide counterexamples to this conjecture \cite{Basilio-Dodziuk-Sormani}. The $\MinA$ condition is natural in this setting given the crucial role that stable minimal surfaces played in Schoen-Yau's proof of the torus rigidity theorem \cite{Schoen-Yau-min-surf}. Also, the $\MinA$ condition has appeared in the rigidity results of Bray, Brendle and Neves \cite{BBN-CAG-10} for area minimizing 2-spheres in 3-spheres and Bray, Brendle, Eichmair and Neves \cite{BBEN-CPAM-10} for area minimizing projective planes in 3-manifolds.
 
 Moreover, there are counterexamples to Conjecture \ref{MainConjecture} if SWIF convergence is replaced with GH convergence. Basilio and Sormani constructed sequences of tori satisfying the hypotheses of this conjecture with no GH limit and a GH limit to a non-smooth space that is not the flat torus \cite{Basilio-Sormani-1}. These examples have increasingly thin wells with positive scalar
curvature surrounded by an annular region with $R_{j} \ge -\frac{1}{j}$.  Since thin wells disappear
under SWIF convergence, these examples converge in the SWIF sense.
On the other hand, all of their examples converge in SWIF sense to a flat torus.

The first paper to apply SWIF convergence in the setting of positive scalar curvature was the paper by Lee and Sormani \cite{LeeSormani1} where sequences of rotationally symmetric, asymptotically flat manifolds with ADM mass tending to zero are shown to converge to regions in Euclidean space under SWIF convergence. In this case there are counterexamples given by Lee and Sormani where sequences with the properties above do not converge under GH convergence and hence SWIF convergence is essential. This informs the intuition that SWIF convergence is well suited for convergence questions where positive scalar curvature is natural. This intuition inspires the use of SWIF convergence in Conjecture \ref{MainConjecture} and is reinforced by the results of this paper.

 In this paper, we will prove Conjecture \ref{MainConjecture} in the setting where the metrics are assumed to be warped product metrics.   This setting was first suggested by Sormani after formulating Conjecture \ref{MainConjecture} \cite{Sormani-scalar}.   We find a subsequence which converges in both SWIF and GH sense and we note that a subsequence is necessary because the
sequence could have subsequences converging to different flat tori.  It is perhaps surprising that we
obtain GH convergence as this means that our sequences are not developing long thin wells as in the examples in \cite{Basilio-Sormani-1}.   

In particular, we are going to consider the following two special cases: \\
\noindent (i) \textit{Doubly Warped Products:} For $x, y, z \in [- \pi, \pi]$ and positive $a_j, b_j \colon [- \pi, \pi] \rightarrow \mathbb{R}$, the metric
\begin{equation}
g_j = a_j^2(z) dx^2 + b_j^2(z) dy^2 + dz^2,
\end{equation}
is a doubly warped product. \\
\noindent (ii) \textit{Singly Warped Products:} For $x, y, z \in [- \pi, \pi]$ and positive $f_j \colon [- \pi, \pi] \times [-\pi, \pi] \rightarrow \mathbb{R}$, the metric
\begin{equation}
g_j = dx^2 + dy^2 + f_j^2(x, y) dz^2,
\end{equation}
is a singly warped product. \\

Throughout the rest of this paper, by \textquotedblleft doubly warped product" we will be referring to item (i) above and by \textquotedblleft singly warped product" we will be referring to item (ii) above. Now, we state our main result for doubly warped products. 

The main result for doubly warped products:
\begin{thm}\label{MainThmCase1}
Suppose we have a sequence $M_j = (\mathbb{T}^3, g_j)$, where each $g_j$ is a doubly warped product satisfying
\begin{equation}
R_{j} \ge -\frac{1}{j}, 
\hspace{0.5cm} \Diam(M_j) \le D_0, \,\,\,\text{and} \,\,\, \MinA(M_j) \ge A_0 > 0,
\end{equation}
then there exists a subsequence $M_{j_k}$ converging uniformly to a flat torus. In particular, $M_{j_k}$ converges in the GH and SWIF sense to a flat torus. 
\end{thm}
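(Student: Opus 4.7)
The plan is to reduce the problem to a pair of ODE estimates on the $z$-circle. Writing $\alpha_j = \log a_j$ and $\beta_j = \log b_j$ as periodic functions of $z$, a direct computation of the scalar curvature of $g_j$ gives
\[
 -R_j/2 = \alpha''_j + \beta''_j + (\alpha'_j)^2 + \alpha'_j \beta'_j + (\beta'_j)^2,
\]
and the quadratic form $Q(X, Y) = X^2 + XY + Y^2$ is positive definite with $Q(X, Y) \geq \tfrac{1}{2}(X^2 + Y^2)$. The hypothesis $R_j \geq -1/j$ integrates over $[-\pi, \pi]$, the second-derivative terms vanishing by periodicity, to
\[
 \int_{-\pi}^{\pi} \bigl[(\alpha'_j)^2 + (\beta'_j)^2\bigr] \, dz \leq \frac{2\pi}{j},
\]
so $\alpha'_j, \beta'_j \to 0$ in $L^2$. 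In one dimension this forces $\alpha_j, \beta_j$ to be nearly constant: with $\bar\alpha_j, \bar\beta_j$ denoting the means, the mean-zero Sobolev embedding $\|u\|_{C^0} \leq \sqrt{2\pi}\,\|u'\|_{L^2}$ gives
\[
 \|\alpha_j - \bar\alpha_j\|_{C^0} + \|\beta_j - \bar\beta_j\|_{C^0} \to 0,
\]
so $a_j = e^{\bar\alpha_j}(1+o(1))$ and $b_j = e^{\bar\beta_j}(1+o(1))$ uniformly in $z$. The problem reduces to keeping the sequence of constants $(\bar\alpha_j, \bar\beta_j)$ in a compact subset of $\R^2$.

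Lower bounds come from MinA: the tori $\{x = x_0\}$ and $\{y = y_0\}$ are totally geodesic in $g_j$ (their unit normals $\partial_x/a_j, \partial_y/b_j$ are Killing), hence minimal, with areas $2\pi \int b_j \, dz$ and $2\pi \int a_j \, dz$. Applying $\MinA \geq A_0$ and the uniform closeness above yields $e^{\bar\alpha_j}, e^{\bar\beta_j} \geq A_0/(4\pi^2) - o(1)$. Upper bounds come from the diameter: a direct length estimate in the universal cover, using that any path whose $x$-displacement is an odd multiple of $\pi$ has length at least $\pi \min a_j$ (since $\int a_j(\gamma_z)|\gamma_x'|\,dt \geq \pi\min a_j$), gives $\Diam(M_j) \geq \pi \min a_j$ and hence $e^{\bar\alpha_j} \leq D_0/\pi + o(1)$; the argument for $\bar\beta_j$ is symmetric. (The horizontal tori $\{z=z_0\}$ at critical points of $a_jb_j$ are also minimal with area $4\pi^2 a_j(z_0)b_j(z_0)$, providing the product bound $\min(a_jb_j)\geq A_0/(4\pi^2)$, which is consistent with the above.)

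After extracting a subsequence with $\bar\alpha_j \to \alpha_\infty$ and $\bar\beta_j \to \beta_\infty$, the metrics $g_j$ converge uniformly to the flat torus metric $g_\infty = e^{2\alpha_\infty} dx^2 + e^{2\beta_\infty} dy^2 + dz^2$. Uniform $C^0$ convergence of Riemannian metrics on a fixed compact manifold gives uniform convergence of the intrinsic distance functions and hence GH convergence, and standard stability results then give SWIF convergence. The main obstacle I anticipate is the upper bound on $(\bar\alpha_j,\bar\beta_j)$: MinA alone rules out collapse but not elongation of a warping factor (collapse is precisely the mechanism behind the Basilio--Sormani wells), so ruling out $\bar\alpha_j \to +\infty$ requires combining the near-constancy from the energy estimate with a homotopy-class length argument, which is made possible here by the fact that the unwarped $z$-direction has fixed length $2\pi$.
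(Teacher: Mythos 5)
Your proposal is correct and follows the same overall strategy as the paper: derive an $L^2$ bound on $\alpha'_j,\beta'_j$ from the scalar-curvature inequality, use MinA for lower bounds and Diam for upper bounds on the (nearly constant) warping functions, extract a convergent subsequence, and conclude GH/SWIF convergence from $C^0$ (or $C^{0,1/2}$) convergence of the metric. Where you genuinely improve on the paper is Lemma~\ref{logl2bound}: you observe that the leftover terms after dropping the periodic second derivatives form the quadratic form $Q(X,Y)=X^2+XY+Y^2\geq \tfrac12(X^2+Y^2)$, which gives $\int (\alpha'_j)^2+(\beta'_j)^2\,dz\le 2\pi/j$ in one line. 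The paper instead derives $\int \alpha'_j\beta'_j\,dz \geq -\pi/j$ by a separate integration trick involving $\ln(a_jb_j)''$, which is correct but considerably more roundabout. You also use the constant-$x$ and constant-$y$ tori (totally geodesic, hence minimal) directly for the MinA lower bounds on $\int a_j$ and $\int b_j$, while the paper leans primarily on the constant-$z$ tori and a Schoen--Yau minimizer to get $\min(a_jb_j)\geq A_0/4\pi^2$, then divides; both routes are fine and indeed the paper also records the integral bounds on $a_j$ and $b_j$.

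One small technical slip to correct: the stated reason that $\{x=x_0\}$ is totally geodesic --- that the unit normal $\partial_x/a_j$ is Killing --- is not quite right, since $\mathcal{L}_{\partial_x/a_j}\,g = -a_j'(dz\otimes dx + dx\otimes dz)\ne 0$ when $a_j'\ne 0$. The conclusion is still correct: these slices are fixed-point sets of the reflection isometries $x\mapsto 2x_0-x$ (resp.\ $y\mapsto 2y_0-y$), hence totally geodesic, or one can just check directly that all Christoffel symbols of the form $\Gamma^x_{\cdot\cdot}$ restricted to tangent directions vanish. Everything else in the argument (the 1D mean-zero Sobolev embedding, the length estimate in the universal cover, and the passage to GH/SWIF convergence from uniform metric convergence via Corollary~\ref{SW-SmoothConvToSWIF}) is sound.
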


To prove Theorem \ref{MainThmCase1}, we first show in Theorem \ref{W12convergence} that the scalar curvature bound allows us to find subsequences of the warping functions that converge to nonzero constants in $W^{1,2}(S^1)$. A key step in obtaining these convergent subsequences is the existence of upper and lower uniform bounds on the warping functions found in Proposition \ref{UpperLowerBoundsDoubly}. We show these bounds can be derived from the MinA and diameter bounds in the hypotheses of our theorem. 
It then follows from Morrey's inequality for one dimensional domains that in fact we have $C^{0, \frac{1}{2}}$ convergence. From here we obtain uniform, GH, and SWIF convergence. Note that we did not use a uniform volume bound, yet this is necessary for Conjecture \ref{MainConjecture} to hold in general.

The main result for singly warped products:

\begin{thm}\label{MainThmCase2}
Suppose we have a sequence $M_j = (\mathbb{T}^3, g_j)$, where $g_j$ is a singly warped product satisfying 
\begin{equation}
R_{j} \ge -\frac{1}{j}, 
\hspace{0.5cm} \Vol(M_j) \le V_0, 
\,\,\,\text{and} \,\,\, \MinA(M_j) \ge A_0 > 0,
\end{equation}
Then, there exists a subsequence $M_{j_k}$ converging uniformly to a flat torus. In particular, $M_{j_k}$ converges in the GH and SWIF sense to a flat torus.
\end{thm}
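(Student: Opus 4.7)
The plan is to reduce the scalar curvature hypothesis to an elliptic inequality on the base $\mathbb{T}^2$, establish uniform $L^\infty$ and $W^{1,2}$ compactness for the warping functions $f_j$, and then derive the desired metric convergence. The warped-product scalar curvature formula gives $R_j = -2\Delta f_j/f_j$ on the base $\mathbb{T}^2$, so the hypothesis $R_j \geq -1/j$ is equivalent to the pointwise inequality $\Delta f_j \leq f_j/(2j)$. Testing this against $1/f_j$ and using $\int_{\mathbb{T}^2}\Delta\log f_j = 0$ yields the central $W^{1,2}$-smallness estimate
\begin{equation*}
\int_{\mathbb{T}^2}|\nabla \log f_j|^2\,dA = \int_{\mathbb{T}^2}\frac{\Delta f_j}{f_j}\,dA \leq \frac{2\pi^2}{j}\longrightarrow 0;
\end{equation*}
integrating against $1$ (with $\int\Delta f_j = 0$) also gives $\|\Delta f_j\|_{L^1} \leq V_0/(2\pi j)\to 0$.

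The next task is to obtain uniform two-sided bounds $0 < c \leq f_j \leq C$. Vertical cylinders $\gamma\times S^1_z$ over closed curves in the base have area $2\pi\int_\gamma f_j\,ds$; in each nontrivial free homotopy class on $\mathbb{T}^2$, the minimizer is a closed geodesic of the conformally weighted metric $f_j^2\,g_{\mathrm{flat}}$, producing a stable minimal torus. Hence $\MinA(M_j)\geq A_0$ forces $\int_0^{2\pi}f_j(x,y_0)\,dx\geq A_0/(2\pi)$ for every $y_0$ (and symmetrically in $y$), so by Fubini $\int_{\mathbb{T}^2}f_j\,dA\geq A_0$. Combined with Moser's weak Harnack inequality applied to the positive super-solution $-\Delta f_j + f_j/(2j)\geq 0$, this yields the uniform lower bound $\inf f_j \geq c_0 > 0$.

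The main difficulty is the uniform upper bound, since $W^{1,2}(\mathbb{T}^2)$ is the critical 2D Sobolev space and does not embed in $L^\infty$ (in particular $\log f_j$ can in principle develop logarithmic singularities despite the $W^{1,2}$-smallness). The approach is a blow-up analysis: if $M_j := \sup f_j \to \infty$ at some $x_j$, the rescaling $\tilde f_j(y) = f_j(x_j + y/\sqrt{M_j})/M_j$ gives a sequence with $\tilde f_j(0)=1$, $\tilde f_j\leq 1$, $\int_{B_R}|\nabla\tilde f_j|^2\,dy\leq 2\pi^2/j\to 0$, $\Delta\tilde f_j\leq \tilde f_j/(2jM_j)\to 0$, and $\int_{B_R}\tilde f_j\,dy\leq V_0/(2\pi)$ for every fixed $B_R\subset\mathbb{R}^2$ (which eventually fits inside the injectivity radius). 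The gradient estimate forces any $W^{1,2}$-limit to be a nonnegative constant, while the mass bound forces this constant to vanish as $R\to\infty$; the pointwise PDE is the crucial input that upgrades the $W^{1,2}$ information to the pointwise control needed to contradict $\tilde f_j(0)=1$.

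With $0 < c \leq f_j \leq C$ established, Rellich--Kondrachov together with the $W^{1,2}$-smallness of $\log f_j$ extracts a subsequence $f_{j_k}$ converging strongly in every $L^p$ ($p<\infty$) and weakly in $W^{1,2}$ to a positive constant $c_*$. Elliptic regularity, using $\|\Delta f_j\|_{L^1}\to 0$ and the uniform $L^\infty$ control, upgrades the convergence to $C^0$. Uniform convergence $f_{j_k}\to c_*$ yields uniform convergence of metrics $g_{j_k}\to dx^2+dy^2+c_*^2\,dz^2$, a flat metric on $\mathbb{T}^3$, from which Gromov--Hausdorff convergence is immediate and Sormani--Wenger Intrinsic Flat convergence follows by standard comparisons between SWIF and uniform metric convergence.
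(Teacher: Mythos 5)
Your setup matches the paper's: the reduction to $\Delta f_j \le f_j/(2j)$, the identity giving $\int_{\mathbb{T}^2}|\nabla\log f_j|^2\to 0$, and the $\MinA$ argument via vertical minimal tori over $f_j$-weighted geodesics in each free homotopy class are all essentially the arguments of Lemmas \ref{Case2scalarcurvature}, \ref{bound from minA singly warped} and the $W^{1,2}$ estimate \eqref{hjW12}. However, the uniform upper bound on $f_j$ --- which is the technical heart of the paper (Lemma \ref{LemBeforeStamp} and Proposition \ref{LinfinityEsthj}) --- is where your proposal has a genuine gap. Your rescaled functions $\tilde f_j$ satisfy $\tilde f_j(0)=1$, $0\le\tilde f_j\le 1$, $\int_{B_R}|\nabla\tilde f_j|^2\to 0$, $\int_{B_R}\tilde f_j\le V_0/(2\pi)$, and $\Delta\tilde f_j\le\varepsilon_j\tilde f_j$, but this last inequality makes $\tilde f_j$ a \emph{supersolution}, for which the mean-value comparison runs the wrong way: $\tilde f_j(0)\ge\fint_{B_r}\tilde f_j-C\varepsilon_j r^2$. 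A supersolution is perfectly allowed to have a unit spike at the origin while its $L^1$ mass and Dirichlet energy are both small: truncated logarithmic cones $\min\bigl(1,\log(R/|x|)/\log(R/\delta)\bigr)$ with $\delta\le R e^{-R^2}$ satisfy every one of your blow-up constraints with $\tilde f_j(0)=1$. So the "pointwise PDE" cannot by itself upgrade the $W^{1,2}$-smallness to pointwise control, and no contradiction is available. The paper circumvents exactly this critical-exponent obstruction by running a Stampacchia iteration on $h_j=\log f_j$, where the pointwise inequality $\Delta h_j+|\nabla h_j|^2\le 1/(2j)$, tested against $H_k=(h_j-k)_+$, produces the extra term $\int H_k|\nabla H_k|^2$ that drives the superlevel-set decay $|l-k|^4|A(l)|\le C|A(k)|^{7/3}$. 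That gradient-squared structure, which you discard after integrating once, is precisely the missing ingredient.

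For the lower bound, your idea of applying weak Harnack to the supersolution $f_j$ and coupling with the $\MinA$ integral bound is a reasonable alternative to the paper's barrier argument (Lemma \ref{minControl} and Lemma \ref{C0S1}), but as stated it is incomplete: weak Harnack is local and relates $\inf_{B_r}f_j$ only to $\|f_j\|_{L^p(B_{2r})}$, while $\MinA$ controls only the global slice integrals. One must run a chaining argument over a cover of $\mathbb{T}^2$ (using $p>1$ so that $(\fint f_j^p)^{1/p}\ge\fint f_j\ge A_0/(4\pi^2)$, and exploiting that neighboring ball averages are mutually comparable) to obtain $\inf_{\mathbb{T}^2}f_j\ge c_0>0$. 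Finally, the endgame is also on shaky ground: $\|\Delta f_j\|_{L^1}\to 0$ does not give $C^0$ convergence in two dimensions (this is the borderline case of Calder\'on--Zygmund theory), and the paper instead invokes the Allen--Sormani theorem (Theorem \ref{Allen-SormaniThm}), which requires only the $L^2$ convergence of Lemma \ref{f_j convergence} together with the two-sided pointwise bounds and delivers uniform, GH, and SWIF convergence directly.
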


To prove Theorem \ref{MainThmCase2}, we find in Lemma \ref{f_j convergence} a subsequence of the warping functions $f_j$ that converges to a positive constant in $W^{1,1}(\mathbb{T}^2)$. Completely different techniques than those of Theorem \ref{MainThmCase1} are used. The proof involves using the Moser-Trudinger inequality in Proposition \ref{LinfinityEsthj}  to gain an $L^p$ bound, $p>2$, on $f_j$, combined with control obtained from the $\MinA$ lower bound in Lemma \ref{bound from minA singly warped}. Then, we use a maximum principle on a certain operator to obtain $C^0$ control from below on the warping functions in Corollary \ref{C0Boundh}, which then allows us to appeal to a result of the first named author to find that a subsequence has the desired convergence to a flat torus~\cite{BALp}. Note that we do not use a uniform diameter bound.

We now give a brief outline of the paper:
In Section \ref{sec: background} we describe the definitions and previous theorems which will be essential to understanding the results of this paper. In the interest of keeping the background concise we offer up references to interesting definitions and results which are not essential to understanding the main results of this paper. In Section \ref{sec: Case 1} the proof of Theorem \ref{MainThmCase1} is given and in Section \ref{sec: Case 2} the proof of Theorem \ref{MainThmCase2} is given. In both sections many interesting estimates are developed which give potential insight into the full conjecture \ref{MainConjecture}. 

\begin{acknowledgement*}
We would like to thank Kai Xu for pointing out a mistake in section \ref{subsec: Convergence of f_j Case 2} which has now been corrected in this version of the paper with a different, simpler argument. The statement of the main theorems have not changed.

This research began at the Summer School for Geometric Analysis, located at and supported by the Fields Institute for Research in Mathematical Sciences. We would like to thank the organizers of this summer school, Spyros Alexakis, Walter Craig, Robert Haslhofer, Spiro Karigiannis and McKenzie Wang.  Finally, the authors would like to thank Christina Sormani for her direction, advice, and constant support. Specifically, Christina brought this team together during the Fields institute to work on this project, organized workshops at the CUNY graduate center which the authors participated in, and provided travel support to the team members. Christina Sormani is supported by NSF grant DMS-1612049.
Brian Allen is supported by the USMA. Davide Parise is partially supported by the Swiss National Foundation grant 200021L\_175985.
\end{acknowledgement*}

\section{Background} \label{sec: background}
\noindent In this section, we review some basic definitions and facts that will be used throughout the paper.

We start by reviewing the notion of uniform convergence of metric spaces. Consider two metric spaces $(X,d_1)$, $(X,d_2)$ and define the uniform distance between these metric spaces to be
\begin{align}
d_{unif}(d_1,d_2) = \sup_{x,y\in X} |d_1(x,y) - d_2(x,y)|.
\end{align}
Notice that if you think of the metrics as functions, $d_i: X\times X \rightarrow \R$, then the uniform distance $d_{unif}(d_1,d_2)$ is equivalent to the $C^0$ distance between functions. We say that a sequence of metrics spaces $(X, d_j)$ converges to the metric space $(X,d_{\infty})$ if $d_{unif}(d_j, d_{\infty}) \rightarrow 0$ as $j \rightarrow \infty$.

One limitation of uniform convergence is that it requires the metric spaces to have the same topology and so other important notions of convergence have been introduced which do not depend on topology. Two particularly important notions of convergence for metric spaces and Riemannian manifolds are Gromov-Hausdorf (GH) convergence and Sormani-Wenger Intrinsic Flat convergence (SWIF). In this paper we will be able to show GH and SWIF convergence but due to the symmetries of the metrics assumed we will also be able to show uniform convergence and so we will not define these notions in this paper. For the definition of GH convergence see \cite{BBI} and the references therein. For the definition of SWIF convergence see \cite{SW-JDG}.

In the case of doubly warped products we will be able to show $C^{0,\frac{1}{2}}$ convergence of the warping functions $a_j(z), b_j(z)$ to constants in section \ref{sec: Case 1}. We will then wrap up the proof of Theorem \ref{MainThmCase1} by applying the following corollary of Proposition 3.7 in \cite{Gromov-metric2}, for the case of GH convergence, and a corollary of Theorem 5.6 in \cite{SW-JDG}, for the case of SWIF convergence.

\begin{corollary}\label{SW-SmoothConvToSWIF}
If a sequence of Riemannian manifolds $M_j=(M, g_j)$ converges to the Riemannian manifold $M_{\infty}=(M, g_{\infty})$ in the $C^{0,\alpha}$ sense then $M_j$ converges in GH and SWIF to $M_{\infty}$ as well. 
\end{corollary}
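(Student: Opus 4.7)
The plan is to observe that $C^{0,\alpha}$ convergence of the metric tensors on the fixed underlying manifold $M$ is strong enough to force uniform convergence of the induced distance functions, after which both conclusions reduce to the two cited results.

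First I would translate $C^{0,\alpha}$ convergence $g_j \to g_\infty$ into a bilipschitz comparison of metrics. In particular one has $C^0$ convergence of $g_j$ as symmetric $(0,2)$-tensors, so for any $\eps > 0$ there is $J$ with $(1-\eps) g_\infty \le g_j \le (1+\eps) g_\infty$ on $M$ for all $j \ge J$. Integrating along piecewise smooth paths gives
\[
(1-\eps)^{1/2} L_{g_\infty}(\gamma) \le L_{g_j}(\gamma) \le (1+\eps)^{1/2} L_{g_\infty}(\gamma),
\]
and taking the infimum over $\gamma$ yields the same two-sided comparison for the distance functions $d_{g_j}$ and $d_{g_\infty}$. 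Since $\diam(M, g_\infty) < \infty$, this upgrades to $d_{unif}(d_{g_j}, d_{g_\infty}) \to 0$.

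For GH convergence, the identity map $\mathrm{id}:(M, d_{g_j}) \to (M, d_{g_\infty})$ is now a surjective $\eps_j$-isometry with $\eps_j \to 0$, so Proposition 3.7 of \cite{Gromov-metric2} yields $M_j \GHto M_\infty$. For SWIF convergence, I would invoke Theorem 5.6 of \cite{SW-JDG}. The bilipschitz comparison with constants tending to $1$ is exactly the hypothesis of that theorem on a fixed compact manifold: the area formula produces $\Vol(M, g_j) \to \Vol(M, g_\infty)$, and one constructs an explicit filling between the integral currents associated to $(M, g_j)$ and $(M, g_\infty)$ by the standard product construction over a thin interval, producing flat distance of order $\eps_j$ times the fixed volume, whence SWIF convergence.

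The only real obstacle is routine bookkeeping: one must check that the precise hypotheses of \cite[Theorem 5.6]{SW-JDG} are implied by the bilipschitz comparison derived from $C^{0,\alpha}$ convergence, and in particular that orientation and compatibility of the currents are preserved under the identity map. Once that is confirmed, the corollary follows with no further work, since the same smooth manifold $M$ underlies every $g_j$ and $g_\infty$, eliminating any need to construct nontrivial approximating maps between different spaces.
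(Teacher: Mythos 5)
Your proposal is correct and follows essentially the same route the paper takes: the paper states Corollary~\ref{SW-SmoothConvToSWIF} without a written proof, simply asserting that it follows from Proposition 3.7 of \cite{Gromov-metric2} for GH convergence and Theorem 5.6 of \cite{SW-JDG} for SWIF convergence, and your argument fills in exactly the expected intermediate steps (downgrading $C^{0,\alpha}$ to $C^0$, obtaining a two-sided bilipschitz comparison of the metric tensors and hence of the distance functions, then invoking the two cited results).
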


It is important to note that showing $C^{0,\alpha}$ convergence of the warping functions is equivalent to showing $C^{0,\alpha}$ convergence of the Riemannian manifolds in the doubly warped product case. 

In the singly warped product case we will not be able to show $C^{0,\frac{1}{2}}$ convergence of the warping functions but instead will be able to show $L^p$ convergence for $p>2$ and $C^0$ convergence from below. By combining these estimates with Theorem 1.4 of \cite{BALp} we will be able to conclude uniform, GH, and SWIF convergence. We now move on to produce the estimates needed to apply Corollary \ref{SW-SmoothConvToSWIF} and Theorem Theorem 1.4 of \cite{BALp} in order to prove Theorem \ref{MainThmCase1} in section \ref{sec: Case 1} and prove Theorem \ref{MainThmCase2} in section \ref{sec: Case 2}.

\section{Doubly Warped Products of One Variable} \label{sec: Case 1}

In this section, we will prove Theorem \ref{MainThmCase1}. Recall that we are considering a sequence of doubly warped product metrics $g_j$ on $\mathbb{T}^3$ such that $x, y, z \in [- \pi, \pi]$ and $a_j, b_j \colon [- \pi, \pi] \rightarrow \mathbb{R}$ positive functions, and
\begin{equation}\label{Case1}
g_j = a_j^2(z) dx^2 + b_j^2(z) dy^2 + dz^2.
\end{equation}

\subsection{Scalar Curvature of Doubly Warped Products} \label{subsec: ScalarEq Case 1}
In order to prove Theorem \ref{MainThmCase1} we will need to find an expression for the scalar curvature of a doubly warped product. The resulting differential inequality from $R_j \geq -\frac{1}{j}$ will be key to showing the desired convergence.

\begin{lemma}
The scalar curvature for a metric $g= a(z)^2dx^2 + b(z)^2dy^2 + dz^2$ on $\mathbb{T}^3$ is 
\begin{equation} \label{scalarcurvCase1}
R = -2 \left(\frac{a''}{a}+\frac{b''}{b} + \frac{a' b'}{ab}\right).
\end{equation}
\end{lemma}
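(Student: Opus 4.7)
The formula is a standard Riemannian curvature computation, so the proof will be mostly mechanical. My plan is to introduce the orthonormal frame adapted to the product structure,
\[
e_1 = \frac{1}{a}\partial_x, \qquad e_2 = \frac{1}{b}\partial_y, \qquad e_3 = \partial_z,
\]
and to carry out the calculation in this frame. Because $a$ and $b$ depend only on $z$, the Lie brackets are very simple: a direct check gives $[e_3,e_1] = (a'/a)\,e_1$, $[e_3,e_2] = (b'/b)\,e_2$, and $[e_1,e_2]=0$.

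From these brackets Koszul's formula immediately produces the nonvanishing covariant derivatives, namely
\[
\nabla_{e_1}e_1 = -\tfrac{a'}{a}e_3, \qquad \nabla_{e_2}e_2 = -\tfrac{b'}{b}e_3, \qquad \nabla_{e_1}e_3 = \tfrac{a'}{a}e_1, \qquad \nabla_{e_2}e_3 = \tfrac{b'}{b}e_2,
\]
together with $\nabla_{e_3}e_i=0$. With the connection in hand I would then compute the three sectional curvatures of the coordinate 2-planes using $K(e_i,e_j)=\langle R(e_i,e_j)e_j,e_i\rangle$. The two \emph{warped-direction} planes are essentially singly warped products $dz^2+f^2\,d\theta^2$, which gives the familiar
\[
K(e_1,e_3) = -\tfrac{a''}{a}, \qquad K(e_2,e_3) = -\tfrac{b''}{b}.
\]

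The remaining plane $\mathrm{span}\{e_1,e_2\}$ is where the interaction between the two warping functions shows up, and its curvature is not as obviously read off from a singly warped product; a short calculation using the connection coefficients above gives $K(e_1,e_2) = -a'b'/(ab)$. This cross-term is the one place where sign and bookkeeping errors are most likely, so I would treat it as the key check in the computation. Since $\dim M = 3$ and $\{e_1,e_2,e_3\}$ is orthonormal, the scalar curvature is
\[
R = 2\bigl[K(e_1,e_2) + K(e_1,e_3) + K(e_2,e_3)\bigr],
\]
and substituting the three sectional curvatures yields $R = -2\bigl(a''/a + b''/b + a'b'/(ab)\bigr)$, as asserted. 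As a consistency check one could instead apply the general scalar-curvature formula for a multiply warped product with one-dimensional base and one-dimensional fibers; that formula specializes to exactly the same answer.
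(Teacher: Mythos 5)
Your proof is correct, but it takes a genuinely different route from the paper's. The paper simply quotes the Ricci-curvature formulas for doubly warped products from Section~4.2.4 of Petersen's book, observes that $\partial_x,\partial_y,\partial_z$ are Ricci eigenvectors with eigenvalues $-a''/a-a'b'/(ab)$, $-b''/b-a'b'/(ab)$, $-a''/a-b''/b$, and takes the trace. You instead carry out the frame computation from scratch: set up the orthonormal frame, compute the connection via Koszul, read off the three sectional curvatures, and sum them with the multiplicity $2$ that the $3$-dimensional scalar-curvature formula requires. Your route is self-contained and makes the origin of each of the three terms transparent (in particular, the cross-term $-a'b'/(ab)$ is exactly $K(e_1,e_2)$, which is not visible from the singly-warped formula alone), at the cost of being longer than a citation; the paper's approach is shorter but delegates the bookkeeping to a reference.

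One small slip: you wrote $[e_3,e_1]=(a'/a)\,e_1$ and $[e_3,e_2]=(b'/b)\,e_2$, but the correct signs are $[e_3,e_1]=-(a'/a)\,e_1$ and $[e_3,e_2]=-(b'/b)\,e_2$, since
\[
[e_3,e_1]=\Bigl[\partial_z,\tfrac{1}{a}\partial_x\Bigr]=\partial_z\!\left(\tfrac{1}{a}\right)\partial_x=-\tfrac{a'}{a}\,e_1.
\]
This did not propagate: the covariant derivatives you then list ($\nabla_{e_1}e_1=-\tfrac{a'}{a}e_3$, etc.) are the ones that follow from the \emph{correct} brackets via Koszul, and all the sectional curvatures and the final scalar curvature come out right. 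Still, worth fixing, since as written the stated brackets and the stated connection are inconsistent with each other.
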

\begin{proof} By Section 4.2.4 of Petersen's book \cite{Petersen}, a metric of this form has the following Ricci curvature.
\begin{align}
\text{Ric}\left(\frac{\partial}{\partial x}\right) &= \left(-\frac{a''}{a} - \frac{a' b'}{ab}\right)\frac{\partial}{\partial x}
\\\text{Ric}\left(\frac{\partial}{\partial y}\right) &= \left(-\frac{b''}{b} - \frac{a' b'}{ab}\right)\frac{\partial}{\partial y}
\\\text{Ric}\left(\frac{\partial}{\partial z}\right) &= \left(-\frac{a''}{a} - \frac{b''}{b}\right)\frac{\partial}{\partial z}
\end{align}
Thus, we have the conclusion of this lemma.
\end{proof}

This lemma means that under the conditions of Theorem \ref{MainThmCase1}, the condition $R_j \geq -\frac{1}{j}$ translates into the following condition on $a_j$ and $b_j$
\begin{equation}
\frac{a''_j}{a_j}+ \frac{b''_j}{b_j} + \frac{a'_j b'_j}{a_j b_j} \leq \frac{1}{2j}.
\end{equation}

\subsection{Diameter Bounds, the MinA Condition and Uniform Bounds} \label{subsec: MinA Case 1}
We will now investigate the consequences of the MinA hypothesis, with a particular emphasis on how this translates into natural lower and upper bounds for the warping functions. 
We start with the so-called $\MinA$ condition, according to which the smallest possible area of a closed minimal surface in $M_j$ is bounded from below by a certain constant: 
\begin{equation} \label{EquationMinA}
\MinA (M_j) = \inf \{ \Area(\Sigma) | \; \Sigma \text{ is a closed minimal surface in $M_j$}\} \geq A_0 > 0.
\end{equation}
Notice that this lower bound is uniform in $j$. 
Intuitively speaking, this condition allows us to control better the geometry of the $M_j$'s, for instance by avoiding bubbling phenomena and sewing counterexamples. What's more, a careful analysis of these pathological construction yields that the $\MinA$ hypothesis is not only a simplification of the problem but also a rather natural notion. 
For further details on those examples where $\MinA(M_j) \rightarrow 0$ we refer to \cite{Basilio-Sormani-1}. A notion related to the $\MinA$ hypothesis has been used by Bray, Brendle and Neves, in \cite{BBN-CAG-10}, to prove a cover splitting rigidity theorem and by the same authors with Eichmair, in \cite{BBEN-CPAM-10}, to prove a rigidity theorem concerning $\mathbb{RP}^3$.

Our first result is that (2.1) yields a pointwise lower bound, independent of $j$, on the product $a_j(z)b_j(z)$ and uniform lower bounds on the integrals of $a_j$ and $b_j$.

\begin{lemma}\label{subsec: bound from minA doubly warped}
Let $M_j = (\mathbb{T}^3, g_j)$ as in (\ref{Case1}).  If $\MinA(M_j) \geq A_0$, then for all $z \in [-\pi, \pi]$,
\begin{align}
a_j(z)b_j(z) &\geq \frac{A_0}{4\pi^2} \\
\int_{-\pi}^\pi a_j(z) dz &\geq \frac{A_0}{2\pi}, \\
\int_{-\pi}^\pi b_j(z) dz &\geq \frac{A_0}{2\pi}.
\end{align}
\end{lemma}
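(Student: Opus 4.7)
The plan is to exhibit, for each inequality, a closed minimal (in fact totally geodesic) surface in $M_j$, compute its area explicitly, and then invoke the $\MinA$ hypothesis. Since $g_j = a_j^2(z)\, dx^2 + b_j^2(z)\, dy^2 + dz^2$ admits natural warped product decompositions in each coordinate direction, three natural families of slice surfaces present themselves, and the three candidate minimal surfaces arise from these.

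For the first inequality, I would look at the $2$-torus $\Sigma_{z_0} = \{z = z_0\}$. Its induced metric is the flat metric $a_j(z_0)^2\, dx^2 + b_j(z_0)^2\, dy^2$, so $\Area(\Sigma_{z_0}) = 4\pi^2 a_j(z_0) b_j(z_0)$. A short Christoffel symbol computation using the unit normal $\partial_z$ gives that the mean curvature of $\Sigma_{z_0}$ equals $(a_j b_j)'(z_0)/(a_j(z_0) b_j(z_0))$. The continuous periodic function $a_j b_j$ attains its minimum at some $z_* \in [-\pi, \pi]$, where its derivative vanishes, so $\Sigma_{z_*}$ is a closed minimal surface. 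Invoking $\MinA(M_j) \geq A_0$ then gives $4 \pi^2 a_j(z_*) b_j(z_*) \geq A_0$, and since $z_*$ globally minimizes $a_j b_j$, this propagates to $a_j(z) b_j(z) \geq A_0/(4\pi^2)$ for every $z$.

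For the integral bounds, I would use the slices $\Sigma^y_{y_0} = \{y = y_0\}$ and $\Sigma^x_{x_0} = \{x = x_0\}$. The key observation is that $g_j$ can be viewed as a warped product with base $(\{y = y_0\}, a_j(z)^2\, dx^2 + dz^2)$, fiber $(S^1_y, dy^2)$, and warping function $b_j(z)$; since the warping function depends only on the base coordinate $z$, a standard warped-product identity (see e.g.\ O'Neill, Chapter 7) implies the horizontal leaves $\Sigma^y_{y_0}$ are totally geodesic, hence minimal. Their area is $2\pi \int_{-\pi}^{\pi} a_j(z)\, dz$, so $\MinA(M_j) \geq A_0$ immediately yields the second inequality. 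Exchanging the roles of $x$ and $y$, the slices $\Sigma^x_{x_0}$ are totally geodesic with area $2\pi \int_{-\pi}^{\pi} b_j(z)\, dz$, yielding the third inequality.

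The only parts of the argument requiring care are the mean curvature formula for the $z$-slices and the total geodesy of the $x$- and $y$-slices, both of which are standard facts about warped products. No substantive analytic obstruction is anticipated, as each conclusion reduces to an explicit area comparison for a candidate minimal surface produced directly from the warped product structure.
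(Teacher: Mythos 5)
Your proof is correct, and it takes a genuinely different route from the one in the paper. The paper invokes the Schoen--Yau existence theorem to produce an area-minimizing representative $\Sigma$ in the homotopy class of each coordinate slice; since every coordinate slice lies in the same homotopy class, its area dominates $\Area(\Sigma)$, which in turn is bounded below by $A_0$ via the $\MinA$ hypothesis. That argument is correct but requires knowing that the Schoen--Yau surface is area-minimizing, not merely minimal (the paper's inequality $\Area(\phi_{z=z_0}) \geq \MinA(M_j)$ for arbitrary $z_0$ is using this tacitly). You bypass Schoen--Yau entirely by exhibiting explicit closed minimal surfaces: the $x$- and $y$-slices are totally geodesic leaves of a warped product (which one can also check directly, since the only nonvanishing Christoffel symbols of $g_j$ are $\Gamma^x_{xz}$, $\Gamma^z_{xx}$, $\Gamma^y_{yz}$, $\Gamma^z_{yy}$, giving $\nabla_{\partial_x}\partial_y = \nabla_{\partial_z}(\tfrac{1}{b_j}\partial_y) = 0$), and the $z$-slice at the global minimizer $z_*$ of $a_jb_j$ is minimal because its mean curvature $(a_jb_j)'/(a_jb_j)$ vanishes there. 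Applying $\MinA$ at $z_*$ and at any $x$- or $y$-slice then gives all three bounds at once. Your approach is more elementary and more self-contained, at the price of the small computational check of the mean curvature formula; the paper's approach is shorter to state and would survive in settings without the explicit warped-product structure, but here it imports machinery that is not actually needed.
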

\begin{proof}
Consider the three homotopy classes 
\begin{equation}
[x,y,0): x,y \in S^1], [(x,0,z): x,z \in S^1] \; \text{and} \; [(0,y,z): z,y \in S^1],
\end{equation} 
in the three dimensional torus $\mathbb{T}^3$. These are just the homotopy classes of two dimensional tori in our manifold. 
By a result of Schoen-Yau \cite{Schoen-Yau-min-surf}, we can find a minimal surface in each of these homotopy classes. So, if $\phi_{z=0}(x,y): \mathbb{T}^2 \to M_j$ is the embedding of the representative $(x,y,0)$ into our manifold $M_j$, its area satisfies
\begin{equation}
Area(\phi_{z=0}(x,y))\geq \MinA(M_j)\geq A_0 > 0
\end{equation}
Similarly,
\begin{equation}
Area(\phi_{x=0}(z,y)) \geq A_0>0,
\end{equation}
\begin{equation}
Area(\phi_{y=0}(x,y)) \geq A_0>0.
\end{equation}
Let $\omega$ be the 2-form  
$ a_j(z)b_j(z)dx\wedge dy$ obtained by contracting the volume form with $\frac{\de}{\de z}$. Then 
\begin{align}
\Area(\phi_{z=0}(x,y)) &= \int_{-\pi}^\pi \int_{-\pi}^\pi \phi_{z=0}^*(\omega)
= \int_{-\pi}^\pi \int_{-\pi}^\pi a_j(0)b_j(0) dxdy
= 4\pi^2 a_j(0)b_j(0).
\end{align}
Observe that we could have chosen any other $z$-level set. For any $z_0$, 
\begin{equation}
a_j(z_0)b_j(z_0) \geq \frac{\Area(\phi_{z=z_0}(x,y))}{4\pi^2} \geq \frac{A_0}{4\pi^2}
\end{equation}
This establishes the first part of the theorem. 
 
For the other two parts of the theorem, we just compute the areas of the embeddings $\phi_{x=0}$ and $\phi_{y=0}$ and apply the same argument as above. 
The computations here give
\begin{align}
\Area(\phi_{x=0}(y,z)) = \int_{-\pi}^\pi \int_{-\pi}^\pi b_j(z) dydz =2\pi \int_{-\pi}^\pi b_j(z) dz
\end{align}
and 
\begin{align}
\Area(\phi_{y=0}(x,z)) = \int_{-\pi}^\pi \int_{-\pi}^\pi a_j(z) dxdz =2\pi \int_{-\pi}^\pi a_j(z) dz.
\end{align}
Therefore we can find constants $C_1, C_2$ giving the last two estimates in the theorem.
\end{proof}

We now investigate the diameter bound $\Diam(M_j)\leq D_0$ and find uniform upper and lower bounds for $a_j$ and $b_j$. In doing so, we need the following two lemmas regarding the warping functions $a(z), b(z)$ on a fixed $M_j$.

\begin{lemma}\label{lower}
Let $M_j = (\mathbb{T}^3, g_j)$ as in (\ref{Case1}). If $M_j$ has diameter $Diam(M_j) \leq D_0$, then
\begin{equation} \label{productbound}
\min_{z \in [-\pi,\pi]}a_j(z)\leq D_0 \quad \text{and} \quad \min_{z \in [-\pi,\pi]}b_j(z)\leq D_0,
\end{equation} 
\end{lemma}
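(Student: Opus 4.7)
The goal is to turn the diameter bound into a bound on the pointwise minimum of $a_j$ (the argument for $b_j$ being identical by the symmetry $(x,a_j) \leftrightarrow (y,b_j)$). The strategy is to exhibit two specific points in the torus whose distance forces $a_j$ to be small at its minimum, namely two points antipodal along the $x$-circle sitting at the height where $a_j$ attains its minimum.

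First I would invoke continuity and compactness of $[-\pi,\pi]/\sim$ to pick $z^\ast$ with $a_j(z^\ast) = \min_z a_j(z)$, and consider the two points $p = (0,0,z^\ast)$ and $q = (\pi,0,z^\ast)$ on $\mathbb{T}^3$. Next I would estimate the length of any piecewise smooth path $\gamma(t)=(x(t),y(t),z(t))$ from $p$ to $q$. Passing to the universal cover $\R^3$, the point $q$ lifts to $(\pi + 2\pi k, 2\pi m, z^\ast)$ for some $k,m \in \mathbb{Z}$, and
\begin{equation}
\int_0^1 |\dot x(t)|\, dt \;\geq\; \Bigl|\int_0^1 \dot x(t)\, dt\Bigr| \;=\; |\pi + 2\pi k| \;\geq\; \pi.
\end{equation}
Dropping the $b_j^2 \dot y^2$ and $\dot z^2$ terms under the square root and then using $a_j(z(t)) \geq a_j(z^\ast)$, one obtains
\begin{equation}
L(\gamma) \;=\; \int_0^1 \sqrt{a_j^2(z)\,\dot x^2 + b_j^2(z)\,\dot y^2 + \dot z^2}\, dt \;\geq\; a_j(z^\ast)\int_0^1 |\dot x(t)|\, dt \;\geq\; \pi\, a_j(z^\ast).
\end{equation}

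Taking the infimum over paths yields $d_{g_j}(p,q) \geq \pi a_j(z^\ast)$, and the hypothesis $\Diam(M_j)\leq D_0$ then gives $\pi a_j(z^\ast) \leq D_0$, which is stronger than the claimed $\min a_j \leq D_0$. The symmetric argument with $q' = (0,\pi,z^{**})$, where $z^{**}$ minimizes $b_j$, handles $b_j$. The only place where care is needed is in the covering-space step: one must check that every homotopy class of paths from $p$ to $q$ requires total $x$-variation at least $\pi$, which is what the lift computation above provides. Beyond that, the estimate is essentially elementary, and I do not anticipate any real obstacle — the lemma is a soft consequence of the product structure of the metric and the diameter bound.
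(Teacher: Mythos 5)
Your proposal is correct and follows essentially the same strategy as the paper: pick two points separated in the $x$-direction, lift a connecting path to the universal cover to bound the total $x$-variation from below, and then use $a_j(z(t))\geq \min_z a_j$ to bound the length from below by $\min_z a_j$ times that variation. The only difference is cosmetic --- the paper uses the endpoints $(0,0,0)$ and $(1,0,0)$ (so the $x$-variation lower bound is $1$ and the conclusion is exactly $\min a_j \leq D_0$), whereas you use antipodal points at distance $\pi$ in the $x$-circle, which gives the slightly sharper $\pi\min a_j \leq D_0$; placing your endpoints at the minimizing level $z^\ast$ is harmless but also unnecessary, since the pointwise bound $a_j(z(t))\geq \min a_j$ is used either way.
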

\begin{proof}
Consider two points on the torus $P_1 = (0,0,0)$ and $P_2 = (1, 0, 0)$. For $t \in [0,1]$, let $\Gamma(t)=(x(t),y(t),z(t))$ be the minimal geodesic with $\Gamma(0) = P_1$ and $\Gamma(1) = P_2$. We may think of $\Gamma(t)$ as a path in $\mathbb{R}^3$ starting at $(0,0,0)$ and ending at 
$(1+2\pi n_1, 2\pi n_2, 2\pi n_3)$ for some $n_1, n_2, n_3 \in \mathbb{Z}$. Thus,
\begin{equation}
1 \leq \abs{\int_{0}^1 x'(t) dt}.
\end{equation}
Now, let $z_1$ be such that $a_j(z_1) = \min_{z \in [-\pi, \pi]}a_j(z)$, which is positive by assumption (\ref{Case1}). Note that $z_1$ depends on $j$.  Then,
\begin{align}
\min_{z \in [-\pi, \pi]}a_j(z) = a_j(z_1) &\leq a_j(z_1) \abs{\int_{0}^{1}x'(t) dt} 
\\&\leq \int_{0}^{1} a_j(z_1) \abs{x'(t)} dt
 \\&\leq \int_{0}^{1} \sqrt{a_j(z_1)^2 x'(t)^2 + b_j(z(t))^2y'(t)^2 + z'(t)^2} dt
\\&\leq \int_{0}^1 \sqrt{a_j(z(t))^2 x'(t)^2 + b_j(z(t))^2y'(t)^2 + z'(t)^2} dt 
\\&= \mathrm{Length}(\Gamma) \leq \Diam(M_j) \leq D_0
\end{align}
We may do the same for $b_j$ using a minimal geodesic connecting $(0,0,0)$ and $(0,1,0)$. 
\end{proof}
\begin{lemma}\label{logl2bound}
Let $M_j = (\mathbb{T}^3, g_j)$ as in (\ref{Case1}). If $R_j \geq -\frac{1}{j}$, then the functions $\alpha_j(z):=\ln(a_j(z))$ and $\beta_j(z):=\ln(b_j(z))$ satisfy 
\begin{equation}\label{gradbound}
\int_{-\pi}^{\pi} {\alpha'_j}^2 dz \leq \frac{2\pi}{j} \quad \text{and} \quad \int_{-\pi}^{\pi} {\beta'_j}^2 dz\leq \frac{2\pi}{j}.
\end{equation}
\end{lemma}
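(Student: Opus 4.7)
The plan is to work directly from the scalar curvature identity derived above, substitute the logarithms to turn the curvature expression into a divergence plus a positive definite quadratic form in the first derivatives, and then integrate over the circle.

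First I would observe that from (\ref{scalarcurvCase1}), the hypothesis $R_j \geq -\tfrac{1}{j}$ rewrites as
\begin{equation*}
\frac{a_j''}{a_j} + \frac{b_j''}{b_j} + \frac{a_j' b_j'}{a_j b_j} \leq \frac{1}{2j}.
\end{equation*}
Setting $\alpha_j = \ln a_j$ and $\beta_j = \ln b_j$, which is well-defined since $a_j, b_j > 0$, a direct computation gives $\frac{a_j''}{a_j} = \alpha_j'' + (\alpha_j')^2$, $\frac{b_j''}{b_j} = \beta_j'' + (\beta_j')^2$, and $\frac{a_j' b_j'}{a_j b_j} = \alpha_j' \beta_j'$. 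The scalar curvature inequality therefore becomes
\begin{equation*}
\alpha_j'' + \beta_j'' + (\alpha_j')^2 + (\beta_j')^2 + \alpha_j' \beta_j' \leq \frac{1}{2j}.
\end{equation*}

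Next I would integrate this inequality over $z \in [-\pi,\pi]$. Since $a_j$ and $b_j$ are smooth positive functions on the circle $S^1$, so are $\alpha_j$ and $\beta_j$, and in particular they are periodic; hence $\int_{-\pi}^{\pi} \alpha_j'' \, dz = \int_{-\pi}^{\pi} \beta_j'' \, dz = 0$. What remains is
\begin{equation*}
\int_{-\pi}^{\pi} \left[ (\alpha_j')^2 + (\beta_j')^2 + \alpha_j' \beta_j' \right] dz \leq \frac{\pi}{j}.
\end{equation*}
The key algebraic fact is that the quadratic form $x^2 + y^2 + xy$ is positive definite; explicitly, $x^2 + y^2 + xy \geq \tfrac{1}{2}(x^2 + y^2)$, which follows from $xy \geq -\tfrac{1}{2}(x^2+y^2)$ (or equivalently from diagonalizing the $2\times 2$ matrix $\bigl(\begin{smallmatrix}1 & 1/2\\ 1/2 & 1\end{smallmatrix}\bigr)$ with eigenvalues $\tfrac{1}{2}$ and $\tfrac{3}{2}$). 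Applying this pointwise under the integral yields
\begin{equation*}
\frac{1}{2} \int_{-\pi}^{\pi} \left[ (\alpha_j')^2 + (\beta_j')^2 \right] dz \leq \frac{\pi}{j},
\end{equation*}
and since both integrands are nonnegative each one is bounded by $\tfrac{2\pi}{j}$, which is exactly (\ref{gradbound}).

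There is no serious obstacle here; the only subtlety is recognizing that periodicity of the warping functions kills the second-derivative terms after integration, and that the cross term $\alpha_j'\beta_j'$ is dominated by the diagonal terms so that the integrated inequality controls the individual $L^2$ norms. Both ingredients are elementary, so the argument is clean and self-contained.
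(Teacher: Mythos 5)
Your proof is correct and, from the integration step onward, takes a genuinely different and cleaner route than the paper. Both proofs start identically: rewrite the scalar curvature inequality in terms of $\alpha_j = \ln a_j$, $\beta_j = \ln b_j$ to get $\alpha_j'' + \beta_j'' + (\alpha_j')^2 + (\beta_j')^2 + \alpha_j'\beta_j' \leq \tfrac{1}{2j}$, then integrate over $S^1$ so the second-derivative terms vanish by periodicity, leaving $\int_{-\pi}^{\pi}\bigl[(\alpha_j')^2 + (\beta_j')^2 + \alpha_j'\beta_j'\bigr]\,dz \leq \tfrac{\pi}{j}$. At this point you dispose of the cross term by the pointwise positive-definiteness bound $x^2 + y^2 + xy \geq \tfrac{1}{2}(x^2 + y^2)$, which immediately gives $\int (\alpha_j')^2 + (\beta_j')^2\,dz \leq \tfrac{2\pi}{j}$. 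The paper instead proves a separate global lower bound on the cross term: it rewrites the curvature inequality as $\tfrac{(a_jb_j)'' - a_j'b_j'}{a_jb_j} \leq \tfrac{1}{2j}$, integrates, and uses $\int_{-\pi}^{\pi}\ln(a_jb_j)''\,dz = 0$ to deduce $\int_{-\pi}^{\pi}\alpha_j'\beta_j'\,dz \geq -\tfrac{\pi}{j}$, then subtracts. Both reach the same constant $\tfrac{2\pi}{j}$. Your version is more elementary and self-contained (a single pointwise quadratic-form estimate), whereas the paper's approach illustrates a useful structural trick---repackaging the scalar curvature in terms of the product $a_jb_j$---that is reused implicitly elsewhere in the doubly warped setting, e.g.\ in deriving the product bound from the $\MinA$ condition. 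Neither buys a quantitatively better estimate here; your route is simply shorter.
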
 
\begin{proof}
From Lemma \ref{scalarcurvCase1} and $R_j\geq-\frac{1}{j}$, we have
\begin{equation}\label{scalarcurvature}
R_j = \frac{a''_j}{a_j}+\frac{b''_j}{b_j}+\frac{a'_j b'_j}{a_j b_j}\leq\frac{1}{2j}
\end{equation}
Now, we compute the derivatives of $\alpha_j$ and $\beta_j$. 
\begin{equation}
\alpha'_j=\frac{a'_j}{a_j}, \quad \alpha''_j=\frac{a''_j}{a_j}-\frac{{a'_j}^2}{a_j}, \quad \beta'_j=\frac{b'_j}{b_j} \quad \text{and} \quad \beta''_j=\frac{b''_j}{b_j}-\frac{{b'_j}^2}{b_j^2}
\end{equation}
Substituting into (\ref{scalarcurvature}) above inequality we have 
\begin{equation}\label{Rbound}
\alpha''_j + \beta''_j + {\alpha'_j}^2 + {\beta'_j}^2 + \alpha'_j \beta'_j \leq\frac{1}{2j}
\end{equation}

Since $\alpha_j$ and $\beta_j$ are periodic, we may integrate this inequality to find
\begin{equation}
\int_{-\pi}^{\pi} {\alpha_j'}^2 + {\beta_j'}^2 +  \alpha_j'\beta_j'\, dz \leq \frac{\pi}{j} 
\end{equation}
\begin{equation}\label{alpha'beta'bound}\int_{-\pi}^{\pi} {\alpha_j'}^2 + {\beta_j'}^2 \,dz\leq \frac{\pi}{j} - \int_{-\pi}^{\pi}\alpha_j'\beta_j' dz
\end{equation}
Rewriting and then integrating (\ref{scalarcurvature}),
\begin{equation}
\frac{(a_j b_j)'' - a'_j b'_j}{a_j b_j}\leq\frac{1}{2j}
\end{equation}
\begin{equation}\label{intermediatebound}\int_{-\pi}^{\pi} \frac{a'_j b'_j}{a_j b_j}dz \geq \int_{-\pi}^{\pi} \frac{(a_j b_j)''}{a_j b_j}dz - \frac{\pi}{j}\end{equation}
Now, since $a_j b_j$ is periodic,
\begin{equation}
0= \int_{-\pi}^{\pi} \ln(a_j b_j)'' dz = \int_{-\pi}^{\pi} \frac{(a_j b_j)''}{a_j b_j} - \frac{{(a_j b_j)'}^2}{(a_j b_j)^2} dz
\end{equation}
Applying this identity to (\ref{intermediatebound}),
\begin{equation}\label{a'b'bound}\int_{-\pi}^{\pi} \frac{a'_j b'_j}{a_j b_j}dz \geq \int_{-\pi}^{\pi} \frac{(a_j b_j)''}{a_j b_j}dz - \frac{\pi}{j} = \int_{-\pi}^{\pi} \frac{{(a_j b_j)'}^2}{(a_j b_j)^2} dz - \frac{\pi}{j} \geq -\frac{\pi}{j}\end{equation}
Using the definition of $\alpha'_j$ and $\beta'_j$ and applying (\ref{a'b'bound}) to (\ref{alpha'beta'bound}), 
\begin{equation}
\int_{-\pi}^{\pi} {\alpha_j'}^2 + {\beta_j'}^2 dz \leq \frac{\pi}{j} - \int_{-\pi}^{\pi}\alpha_j'\beta_j' dz = \frac{\pi}{j} - \int_{-\pi}^{\pi}\frac{a'_j b'_j}{a_j b_j} dz \leq \frac{2\pi}{j}
\end{equation}
Thus, we have the desired bounds. 
\end{proof}

We now come to the most important result of this section:

\begin{proposition}\label{UpperLowerBoundsDoubly}
Let $M_j = (\mathbb{T}^3, g_j)$ as in (\ref{Case1}). If $R_j \geq -\frac{1}{j}$, $\Diam(M_j) \leq D_0$, and $\MinA(M_j) \geq A_0>0 $, then there exist positive constants $A, A', B, B'$ independent of $j$ such that 
\begin{align}
\frac{A_0}{4\pi^2D_0}e^{-\frac{2\pi}{\sqrt{j}}} &\leq a_j(z) \leq  D_0 e^{\frac{2\pi}{\sqrt{j}}}
\\\frac{A_0}{4\pi^2D_0}e^{-\frac{2\pi}{\sqrt{j}}} &\leq b_j(z) \leq  D_0 e^{\frac{2\pi}{\sqrt{j}}}
\end{align}
for all $z \in S^1$.
\end{proposition}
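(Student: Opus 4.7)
The plan is to combine the three preceding lemmas in this subsection. Lemma \ref{logl2bound} gives an $L^2$ bound of order $1/\sqrt{j}$ on the derivatives of the logarithms $\alpha_j = \ln a_j$ and $\beta_j = \ln b_j$. In one dimension a $W^{1,2}$ bound of the derivative upgrades to a $C^{0,1/2}$-oscillation bound via Cauchy--Schwarz, so $\alpha_j$ and $\beta_j$ oscillate by at most $O(1/\sqrt{j})$. Exponentiating converts this into a two-sided comparison of $a_j$ (resp.\ $b_j$) with its minimum up to a factor of $e^{2\pi/\sqrt{j}}$. From there I would feed the pointwise estimate from Lemma \ref{lower} into the comparison to get the uniform upper bound, and then use the MinA-based product estimate from Lemma \ref{subsec: bound from minA doubly warped} together with the upper bound just obtained to close off the uniform lower bound.

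Concretely, for any $z,w \in [-\pi,\pi]$ Cauchy--Schwarz and Lemma \ref{logl2bound} yield
\begin{equation*}
\abs{\alpha_j(z) - \alpha_j(w)} \leq \sqrt{\abs{z-w}}\left(\int_{-\pi}^{\pi}(\alpha_j')^2\,ds\right)^{1/2} \leq \sqrt{2\pi}\cdot\sqrt{\frac{2\pi}{j}} = \frac{2\pi}{\sqrt{j}},
\end{equation*}
and the analogous bound for $\beta_j$. Exponentiating gives the comparison $e^{-2\pi/\sqrt{j}}\,a_j(w) \leq a_j(z) \leq e^{2\pi/\sqrt{j}}\,a_j(w)$ for all $z,w$, and similarly for $b_j$.

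For the upper bounds, let $w$ be a point where $a_j$ attains its minimum. By Lemma \ref{lower} we have $a_j(w) \leq D_0$, and the comparison above immediately gives $a_j(z) \leq D_0\,e^{2\pi/\sqrt{j}}$ for every $z$; the argument for $b_j$ is identical. For the lower bounds, keep $w$ a minimizer of $a_j$ and apply Lemma \ref{subsec: bound from minA doubly warped} at $w$ together with the uniform upper bound on $b_j$ already established:
\begin{equation*}
a_j(z) \geq a_j(w) \geq \frac{A_0}{4\pi^2\,b_j(w)} \geq \frac{A_0}{4\pi^2 D_0}\,e^{-2\pi/\sqrt{j}}.
\end{equation*}
The same reasoning, with $w$ a minimizer of $b_j$, produces the matching lower bound on $b_j$. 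I do not foresee a genuine obstacle: the main subtlety is only the bookkeeping order (upper bound first, then plug into MinA for the lower bound), and the constants $2\pi/\sqrt{j}$ appear exactly because $\abs{z-w} \leq 2\pi$ on the circle.
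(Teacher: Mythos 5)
Your proposal is correct and follows essentially the same route as the paper: bound the oscillation of $\alpha_j,\beta_j$ by $2\pi/\sqrt{j}$ via Cauchy--Schwarz and Lemma \ref{logl2bound}, combine with Lemma \ref{lower} for the upper bound, then divide the MinA product bound of Lemma \ref{subsec: bound from minA doubly warped} by the upper bound on the other factor to get the lower bound. The only cosmetic difference is that you bound $|\alpha_j(z)-\alpha_j(w)|$ directly by a Morrey-type oscillation estimate while the paper first bounds $\int|\alpha_j'|$ and then estimates $\max\alpha_j-\min\alpha_j$; both yield the identical constant.
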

\begin{proof}
Using the notation of Lemma \ref{logl2bound}, we apply Cauchy-Schwarz and Lemma \ref{logl2bound}. 
\begin{equation}
\int_{-\pi}^{\pi} |\alpha'_j(z)| dz \leq \sqrt{\int_{-\pi}^{\pi} |\alpha'_j(z)|^2 dz} \sqrt{\int_{-\pi}^{\pi} dz}\leq \frac{2\pi}{\sqrt{j}}
\end{equation}
\begin{equation}\int_{-\pi}^{\pi} |\beta'_j(z)| dz \leq \sqrt{\int_{-\pi}^{\pi} |\beta'_j(z)|^2 dz} \sqrt{\int_{-\pi}^{\pi} dz}\leq \frac{2\pi}{\sqrt{j}} 
\end{equation}
So,
\begin{equation}
\begin{split}
&\ln\left(\frac{\max(a_j)}{\min(a_j)}\right)=\max(\alpha_j)-\min(\alpha_j)\leq\int_{-\pi}^{\pi} |\alpha'_j(z)| dz \leq \frac{2\pi}{\sqrt{j}} ,\\
&\ln\left(\frac{\max(b_j)}{\min(b_j)}\right)=\max(\beta_j)-\min(\beta_j)\leq\int_{-\pi}^{\pi} |\beta'_j(z)| dz \leq \frac{2\pi}{\sqrt{j}}\\
\end{split}
\end{equation}
By combining with Lemma \ref{lower},
\begin{equation}\label{maxbound}
\begin{split}
&\max(a_j)\leq e^{\frac{2\pi}{\sqrt{j}}}\min(a_j)\leq D_0 e^{\frac{2\pi}{\sqrt{j}}},\\
&\max(b_j)\leq e^{\frac{2\pi}{\sqrt{j}}}\min(b_j)\leq D_0e^{\frac{2\pi}{\sqrt{j}}}\\
\end{split}
\end{equation}
By Lemma \ref{subsec: bound from minA doubly warped}, $\min(a_j b_j)\geq  \frac{A_0}{4\pi^2}$. Then, combining \eqref{productbound} with \eqref{maxbound}, we get a uniform upper and lower bound for $a_j$ and $b_j$ as follows
\begin{equation}
\begin{split}
&\min(a_j)\geq\frac{\min(a_j b_j)}{\max(b_j)}\geq \frac{A_0}{4\pi^2D_0}e^{-\frac{2\pi}{\sqrt{j}}}, \\
&\min(b_j)\geq\frac{\min(a_j b_j)}{\max(a_j)}\geq \frac{A_0}{4\pi^2 D_0}e^{-\frac{2\pi}{\sqrt{j}}}\\
\end{split}
\end{equation}
Thus, we have the desired uniform upper and lower bounds on $a_j$ and $b_j$.
\end{proof}

\subsection{$W^{1,2}$ Convergence and Proof of the Main Result}\label{subsec: ProofOfMainThm Case 1}

In this section we are going to use the bounds on the warping functions to prove that they converge to constants in $W^{1,2}$. We then use Morrey's inequality to show this implies $C^{0, \frac{1}{2}}$ convergence. 

\begin{thm}\label{W12convergence}
Let $M_j = (\mathbb{T}^3, g_j)$ as in (\ref{Case1}). If $R_j \geq -\frac{1}{j}$, $\Diam(M_j) \leq D_0$, and $\MinA(M_j) \geq A_0>0$, then there exist nonzero constants $a_\infty, b_\infty$ such that, after possibly passing to a subsequence, $a_i \to a_\infty$, $b_i \to b_\infty$ in $W^{1,2}(S^1)$.
\end{thm}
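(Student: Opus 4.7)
The plan is to combine the $L^2$-smallness of $\alpha_j' = (\ln a_j)'$ and $\beta_j' = (\ln b_j)'$ from Lemma \ref{logl2bound} with the uniform two-sided bounds on $a_j, b_j$ from Proposition \ref{UpperLowerBoundsDoubly}. Together these yield a uniform $W^{1,2}(S^1)$ bound on the warping functions together with $L^2$-vanishing of their derivatives, so any weak limit must be a constant, which is strictly positive thanks to the uniform lower bound.

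First I would upgrade the control on the logarithmic derivatives to control on the derivatives themselves. Using $a_j' = a_j \alpha_j'$ and the upper bound $a_j \leq D_0 e^{2\pi/\sqrt{j}} \leq C$ from Proposition \ref{UpperLowerBoundsDoubly}, I compute
\begin{equation*}
\int_{-\pi}^\pi (a_j')^2 \, dz \;=\; \int_{-\pi}^\pi a_j^2\,(\alpha_j')^2 \, dz \;\leq\; C^2 \cdot \frac{2\pi}{j} \;\xrightarrow[j\to\infty]{}\; 0,
\end{equation*}
and analogously for $b_j'$. Combined with the uniform $L^\infty$-bound on $a_j$ this gives a uniform $W^{1,2}(S^1)$-bound on $a_j$ (and on $b_j$).

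Next, I would extract a convergent subsequence. Since $W^{1,2}(S^1)$ embeds compactly into $C^0(S^1)$ by the one-dimensional Morrey inequality, there is a subsequence $a_{j_k}$ converging uniformly to some $a_\infty \in W^{1,2}(S^1)$; after a further extraction we may also assume $a_{j_k}' \rightharpoonup a_\infty'$ weakly in $L^2(S^1)$. Because $\|a_{j_k}'\|_{L^2} \to 0$, the weak limit $a_\infty'$ must vanish, so $a_\infty$ is a constant. Moreover, the uniform lower bound $a_{j_k} \geq \frac{A_0}{4\pi^2 D_0}\, e^{-2\pi/\sqrt{j_k}}$ passes to the uniform limit to yield $a_\infty \geq \frac{A_0}{4\pi^2 D_0} > 0$. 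Strong convergence in $W^{1,2}(S^1)$ is now automatic, since $a_{j_k} \to a_\infty$ in $L^2$ by compact embedding and $a_{j_k}' \to 0 = a_\infty'$ in $L^2$ norm (not merely weakly). Running the same argument for $b_j$ along the already chosen subsequence produces $b_\infty > 0$ with $b_{j_k} \to b_\infty$ in $W^{1,2}(S^1)$.

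There is no serious obstacle here, as the analytic content is already encoded in Lemma \ref{logl2bound} and Proposition \ref{UpperLowerBoundsDoubly}. The only mildly delicate point is that we claim \emph{strong} $W^{1,2}$-convergence and not just weak or $L^2$-convergence; this comes for free from the fact that the derivatives tend to zero in $L^2$-norm, rather than merely weakly. Positivity of the limiting constants is likewise immediate from the uniform positive lower bound, which is ultimately a consequence of the $\MinA$ hypothesis via Lemma \ref{subsec: bound from minA doubly warped}.
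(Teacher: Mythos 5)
Your proof is correct and reaches the same conclusion with the same essential ingredients (Lemma \ref{logl2bound} and Proposition \ref{UpperLowerBoundsDoubly}), but it takes a slightly different route than the paper. The paper works entirely at the level of $\alpha_j = \ln a_j$ and $\beta_j = \ln b_j$: it invokes the Poincar\'e--Wirtinger inequality to show $\alpha_j - \bar\alpha_j \to 0$ in $L^2$, uses the uniform two-sided bounds to control the averages $\bar\alpha_j$, extracts a subsequence with $\alpha_{j_k} \to \alpha_\infty$ (a constant) in $W^{1,2}$, and then \emph{exponentiates} to define $a_\infty = e^{\alpha_\infty}$. You instead convert immediately from the logarithmic derivative to the derivative itself via $a_j' = a_j\,\alpha_j'$ and the uniform $L^\infty$ bound on $a_j$, giving $\|a_j'\|_{L^2}\to 0$ directly, and then run the compactness argument (compact embedding $W^{1,2}(S^1)\hookrightarrow C^0(S^1)$ plus weak $L^2$-compactness for the derivatives) on $a_j$ itself. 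The payoff of your version is that it sidesteps the final step in the paper's proof, which implicitly transfers $W^{1,2}$ convergence of $\alpha_{j_k}$ to $W^{1,2}$ convergence of $a_{j_k} = e^{\alpha_{j_k}}$; that transfer is routine given the uniform bounds, but the paper does not spell it out, whereas your argument never needs it. One very small remark: when you write that the uniform limit $a_\infty$ belongs to $W^{1,2}(S^1)$, the cleanest justification is exactly the weak convergence $a_{j_k}' \rightharpoonup a_\infty'$ you invoke a sentence later (identify the weak $L^2$ limit of the derivatives as the weak derivative of the $C^0$ limit), and since here the derivatives actually converge to $0$ in $L^2$-norm you could state this even more directly without weak convergence at all. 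Also note the paper's statement has a typo ($a_i$, $b_i$ should read $a_j$, $b_j$), and your extraction of a common subsequence for both $a$ and $b$ is handled correctly by further refinement.
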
 
\begin{proof}
Using the notation of Lemma \ref{logl2bound}, we apply the Poincar\'e-Wirtinger inequality and use Lemma \ref{logl2bound} to obtain the limit as $j \to \infty$
\begin{align}
\Vert \alpha_j - \bar{\alpha}_j \Vert_2 &= \Vert \alpha_j-\frac{1}{2\pi}\int_{-\pi}^{\pi} \alpha_j dz \Vert_2\leq C\Vert \alpha'_j\Vert_2\to 0
\\ \Vert \beta_j - \bar{\beta}_j \Vert_2 &= \Vert \beta_j-\frac{1}{2\pi}\int_{-\pi}^{\pi} \beta_j dz\Vert_2 \leq C\Vert \beta'_j \Vert_2 \to 0,
\end{align}
where $C$ is a constant independent of $j$ and $\bar{\alpha}_j$ and $\bar{\beta}_j$ denote the averages of $\alpha_j$ and $\beta_j$ respectively.

From here on, we consider only the functions $\alpha_j$ as the arguments are identical for both $\alpha_j, \beta_j$. After passing to a subsequence, the above shows that we have a limiting function $\alpha_\infty$ so that
\begin{equation}
\alpha_{j_k} \to \alpha_\infty \text{ in } W^{1,2}(S^1)
\end{equation}
where $\alpha_\infty$ is a constant by the fact that
\begin{equation}
\int_{-\pi}^{\pi}|\alpha_\infty - \bar{\alpha}_{j_k} |^2 dz \leq \int_{-\pi}^{\pi}|\alpha_\infty - \alpha_{j_k} |^2 + |\alpha_{j_k} - \bar{\alpha}_{j_k} |^2 dz \to 0 \quad \text{as $j \rightarrow \infty$}.
\end{equation}
Now, by Proposition 2.4, there are positive constants $A, A'$ such that $A\leq a_j \leq A'$, thus  
\begin{equation}
\int_{-\pi}^{\pi} \alpha_j dz = \int_{-\pi}^{\pi} \ln(a_j) dz \leq  \int_{-\pi}^{\pi} \ln(A') dz = 2\pi \ln(A')
\end{equation}
and 
\begin{equation}
2\pi \ln(A) = \int_{-\pi}^{\pi} \ln(A) dz\leq \int_{-\pi}^{\pi} \alpha_j dz
\end{equation}
So, the averages $\bar{\alpha}_j$ cannot get arbitrarily large or arbitrarily small as $i\to \infty$. In particular, $\alpha_\infty$ is a positive constant. 

Now that we have found subsequences $\alpha_{j_k}$ and $\beta_{j_k}$ converging to some nonzero constants $\alpha_\infty$ and $\beta_\infty$, respectively, in $W^{1,2}(S^1)$, we can define 
$a_\infty = e^{\alpha_\infty}, b_\infty = e^{\beta_\infty}$ to obtain subsequences of $a_j, b_j$ converging to nonzero constants $a_\infty, b_\infty$ in $W^{1,2}(S^1)$.
\end{proof}

We are now ready to prove our main result for doubly warped products. 
\begin{proof}[Proof of Theorem \ref{MainThmCase1}]
By Theorem \ref{W12convergence}, we have that a subsequence of $a_j$ and $b_j$ converges in $W^{1,2}$ to constants $a_{\infty}$ and $b_{\infty}$. Applying Morrey's inequality for one-dimensional domains gives that a subsequence of $a_j$ and $b_j$ converges in $C^{0, \frac{1}{2}}$. Note that constant warping functions $a_{\infty}$, $b_{\infty}$ mean that the metric is flat. So, a subsequence of $M_j$ converges in $C^{0,\frac{1}{2}}$ to a flat torus. In particular, a subsequence  GH and SWIF converges to a flat torus by Corollary \ref{SW-SmoothConvToSWIF}.

\end{proof}

\section{Singly Warped Products of Two Variables} \label{sec: Case 2}
In this section we will prove Theorem \ref{MainThmCase2}. Recall that we are considering a sequence of singly warped product metrics $g_j$ on $\mathbb{T}^3$ such that for $x, y, z \in [- \pi, \pi]$ and positive $f_j \colon [- \pi, \pi] \times [-\pi, \pi] \rightarrow \mathbb{R}$, $g_j$ can be written as
\begin{equation}\label{Case2}
g_j = dx^2 + dy^2 + f_j^2(x, y) dz^2.
\end{equation}

The singly warped product case is substantially different than the doubly warped product case because $f_j$ is a function of two variables. This means we will not be able to apply Morrey's inequality to go from $W^{1,2}$ convergence to $C^{0,\alpha}$ convergence as we were able to do for doubly warped products.

\subsection{Scalar Curvature} \label{subsec: Scalar Calculation Case 2}
We first analyze the partial differential inequality on the warping function obtained from $R_j \geq -\frac{1}{j}$.

Applying the calculations of Dobarro and Dozo, we may find an expression for the scalar curvature of a singly warped product on $\mathbb{T}^3$ \cite{Dobarro-Dozo}. 
\begin{lemma}\label{Case2scalarcurvature}
The scalar curvature for a metric $g= dx^2 + dy^2 + f^2(x,y)dz^2$ on $\mathbb{T}^3$ is
\begin{equation}
R = - 2\frac{\Delta f}{f}
\end{equation}
where $\Delta$ is the Euclidean Laplacian. 
\end{lemma}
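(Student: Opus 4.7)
The plan is to derive the scalar curvature formula directly from the standard warped product structure, since the base $B = \mathbb{T}^2$ with metric $dx^2 + dy^2$ is flat and the fiber $F = S^1$ is one-dimensional (hence has vanishing intrinsic curvature). First I would invoke (or briefly re-derive) the general warped product scalar curvature identity: for a metric of the form $g_B + f^2 g_F$ on $B \times F$ with $\dim F = n$, one has
\begin{equation*}
R = R_B + \frac{R_F}{f^2} - 2n\,\frac{\Delta_B f}{f} - n(n-1)\,\frac{|\nabla_B f|^2}{f^2}.
\end{equation*}
Plugging in $R_B = 0$, $R_F = 0$, and $n = 1$ collapses everything to $R = -2\Delta f / f$, which is exactly the claim.

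If one prefers a self-contained derivation (as is the spirit of the Dobarro--Dozo reference), I would instead compute in the frame $\{\partial_x, \partial_y, \partial_z\}$. The nonzero Christoffel symbols are readily seen to be $\Gamma^x_{zz} = -f f_x$, $\Gamma^y_{zz} = -f f_y$, $\Gamma^z_{xz} = f_x/f$, and $\Gamma^z_{yz} = f_y/f$. From these one computes
\begin{equation*}
\mathrm{Ric}(\partial_x,\partial_x) = -\frac{f_{xx}}{f}, \quad \mathrm{Ric}(\partial_y,\partial_y) = -\frac{f_{yy}}{f}, \quad \mathrm{Ric}(\partial_z,\partial_z) = -f\,\Delta f,
\end{equation*}
in parallel with the Ricci computation used for the doubly warped case in Section~\ref{subsec: ScalarEq Case 1}.

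Taking the trace with respect to $g$ gives
\begin{equation*}
R = g^{xx}\mathrm{Ric}(\partial_x,\partial_x) + g^{yy}\mathrm{Ric}(\partial_y,\partial_y) + g^{zz}\mathrm{Ric}(\partial_z,\partial_z) = -\frac{f_{xx}}{f} - \frac{f_{yy}}{f} - \frac{1}{f^2}\cdot f\,\Delta f = -2\,\frac{\Delta f}{f},
\end{equation*}
as desired. There is no real obstacle here: the result is a routine consequence of the warped product Ricci formulas, and the only thing to be careful about is bookkeeping the factors of $f$ in the inverse metric $g^{zz} = 1/f^2$ when contracting $\mathrm{Ric}(\partial_z,\partial_z)$. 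The key payoff, which Section~\ref{sec: Case 2} will exploit, is that the differential inequality $-2\Delta f_j/f_j = R_j \geq -1/j$ becomes a linear inequality $\Delta f_j \leq f_j/(2j)$, ideally suited to maximum principle and Stampacchia-type arguments.
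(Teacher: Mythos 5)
Your proof is correct and its arithmetic checks out: both the specialization of the general warped product scalar curvature identity (with $R_B = R_F = 0$, $n=1$) and the direct Christoffel-symbol computation give $R = -2\Delta f / f$. The paper itself offers no computation here and simply cites Dobarro--Dozo for the general warped product formula, which is exactly your first route; the self-contained Ricci calculation you include is a welcome, equivalent verification rather than a genuinely different approach.
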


\begin{remark}
If we further assume that the $M_j$'s are scalar flat, i.e. $\frac{\Delta f_j}{f_j} = 0 $ then the maximum principle shows that the warping functions must be constant. This is one way to see that scalar flat $3$-tori with a singly warped product metric are isometric to a flat torus. 
\end{remark}

Lemma \ref{Case2scalarcurvature} means that the assumption on scalar curvature in Theorem \ref{MainThmCase2} translates into the following inequality for the warping functions:
\begin{equation}\label{scalarcurvatureinequality}\frac{\Delta f_j}{f_j} \leq \frac{1}{2j}\end{equation}

\subsection{Minimal Surfaces, the MinA Condition and Uniform Bounds}\label{subsec:MinA Condition Case 2} 

In this section we investigate the $\MinA$ condition in a similar fashion as in Subsection \ref{subsec: bound from minA doubly warped} in order to obtain important bounds on $f_j$ which will be used in later subsections. More precisely we will be able to prove that the $\MinA$ lower bound yields  uniform lower bounds on the simple integrals of $f_j(x_0, y)$ and $f_j(x, y_0)$, and on the double integral of $f_j(x, y)$. 

\begin{lemma}\label{bound from minA singly warped}
Let $M_j = (\mathbb{T}^3, g_j)$ as in (\ref{Case2}). If $\MinA(M_j) \geq A_0>0$, then
\begin{align}
& \int_{-\pi}^\pi \int_{-\pi}^\pi f_j(x,y)dxdy \geq A_0, \\
& \int_{-\pi}^\pi f_j(x_0, y) dy \geq \frac{A_0}{2\pi} \textit{ for all } x_0\in [-\pi, \pi],\\
&\int_{-\pi}^\pi f_j(x, y_0) dx \geq \frac{A_0}{2\pi} \textit{ for all } y_0\in [-\pi, \pi].
\end{align}
\end{lemma}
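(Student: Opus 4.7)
The plan is to mimic the template of Lemma \ref{subsec: bound from minA doubly warped} from the doubly warped case. Since $\mathbb{T}^3 = S^1_x \times S^1_y \times S^1_z$, the coordinate slices $\{x = x_0\}$ and $\{y = y_0\}$ are embedded incompressible $2$-tori, so each lies in a non-trivial homotopy class of surfaces in $\mathbb{T}^3$. By the same Schoen--Yau existence theorem invoked earlier, there is an area-minimizing representative of each such class, and by the $\MinA$ hypothesis its area is at least $A_0$.

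First I would compute the induced metric on $\{x = x_0\}$: in coordinates $(y,z)$ this reads $dy^2 + f_j(x_0, y)^2\, dz^2$, with area element $f_j(x_0, y)\, dy\, dz$. Integrating over $[-\pi, \pi]^2$ shows that this coordinate slice has area $2\pi \int_{-\pi}^{\pi} f_j(x_0, y)\, dy$. Since the area-minimizing representative of this homotopy class has area at most that of any embedded representative but at least $A_0$, this immediately yields the second inequality of the lemma; the third follows identically using the slices $\{y = y_0\}$, whose induced metric is $dx^2 + f_j(x, y_0)^2\, dz^2$.

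The first (double-integral) estimate then follows by simply integrating either pointwise bound against the remaining variable: integrating $\int_{-\pi}^\pi f_j(x_0, y)\, dy \geq A_0/(2\pi)$ in $x_0$ over $[-\pi, \pi]$ produces $\int_{-\pi}^\pi \int_{-\pi}^\pi f_j(x,y)\, dx\, dy \geq A_0$, as claimed. Note that, in contrast to the doubly warped setting, the third ambient coordinate direction (here $z$) contributes no useful bound on $f_j$, since the slice $\{z = z_0\}$ carries the flat metric $dx^2 + dy^2$ and hence simply gives $A_0 \leq 4\pi^2$.

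There is essentially no obstacle here: the argument is a direct adaptation of Lemma \ref{subsec: bound from minA doubly warped}. The only minor point to confirm is that the coordinate slices $\{x = x_0\}$ and $\{y = y_0\}$ are $\pi_1$-injective in $\mathbb{T}^3$, so that area-minimizing representatives in the relevant homotopy classes genuinely exist and are subject to the $\MinA$ lower bound; this is immediate from the product decomposition of $\mathbb{T}^3$.
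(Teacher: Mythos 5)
Your proof is correct and follows the same approach as the paper: compute the areas of the coordinate slices $\{x=x_0\}$ and $\{y=y_0\}$, invoke Schoen--Yau existence of an area-minimizing (hence minimal) representative in each homotopy class together with the $\MinA$ hypothesis to bound the slice areas from below by $A_0$, and then integrate one of the pointwise bounds to get the double-integral estimate. Your added remarks about $\pi_1$-injectivity of the slices and the uselessness of the $z$-slice (whose induced metric is flat and independent of $f_j$) are sound but inessential elaborations of the same argument.
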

\begin{proof}
The proof is exactly as in Lemma \ref{subsec: bound from minA doubly warped}. The areas of the embeddings $\phi_{x=x_0}, \phi_{y=y_0}$ in this case are
\begin{align}
\Area(\phi_{x=x_0}) & = 2\pi \int_{-\pi}^\pi f_j(x_0,y)dy,
\end{align}
and 
\begin{align}
\Area(\phi_{y=y_0}) & = 2\pi\int_{-\pi}^\pi f_j(x,y_0)dx.
\end{align}
The first bound follows by integrating either of the bounds above.
\end{proof} 

\subsection{$W^{1,2}$ Convergence of $h_j$} \label{subsec:Convergence of hj Case 2}
Define the sequence $\{h_j\}$ by $h_j(x, y) := \ln(f_j(x, y))$, for every $j \in \mathbb{N}$. Note that these functions are defined on $\mathbb{T}^2 = [-\pi, \pi] \times [-\pi, \pi]$, since they are periodic in $x$ and $y$. Moreover, define $\bar{h}_j$ to be the average of $h_j$ over the torus $\mathbb{T}^2$, i.e. 
\begin{equation}
 \bar{h}_j = \frac{1}{\vert \mathbb{T}^2 \vert} \int_{\mathbb{T}^2} h_j \, dA
\end{equation}
where $\vert \mathbb{T}^2 \vert = 4\pi^2$ and $dA = dx dy$.
The averages $\bar{h}_j$ cannot get arbitrarily large due to the following control inequalities. 
\begin{align} \label{average h cannot get arbitrarily large}
\int_{\mathbb{T}^2} h_j dA = \int_{\mathbb{T}^2} \ln(f_j) dA \leq \ln \left(\int_{\mathbb{T}^2} f_j dA \right ) \leq \ln(\Vol(M_j)) \leq \ln(V_0).
\end{align}

We now calculate the inequality satisfied by $h_j$ 
\begin{equation}
\Delta h_j = \Delta \ln(f_j) = \frac{\Delta f_j}{f_j} - \frac{|\nabla f_j|^2}{f_j^2}.
\end{equation}
Applying (\ref{scalarcurvatureinequality}), we obtain an elliptic inequality satisfied by $h_j$
\begin{align}\label{NiceEllipticEq}
\Delta h_j +|\nabla h_j|^2 \le \frac{1}{2j}.
\end{align}
\begin{proposition}\label{proposition hj L2 convergence to average}
Let $M_j = (\mathbb{T}^3, g_j)$ as in (\ref{Case2}). Let $h_j:= \ln(f_j)$. If $R_j \geq -\frac{1}{j}$, then
$$\lim_{j\to \infty}\Vert h_j - \bar{h}_j \Vert_{L^2(\mathbb{T}^2)} = 0.$$
\end{proposition}
\begin{proof}

Since $f_j$ is periodic in both variables, $h_j$ is as well. So, $h_j$ may be thought of as a smooth function on a flat $2$-torus. Integrating \eqref{NiceEllipticEq} we find
\begin{equation} \label{IntEllipticEq}
\int_{\mathbb{T}^2}\left( \Delta h_j +|\nabla h_j|^2\right) \, dA \le \int_{\mathbb{T}^2}\frac{1}{2j} \, dA
\end{equation}
which then becomes 
\begin{equation}
\int_{\mathbb{T}^2}|\nabla h_j|^2 \, dA \le \frac{1}{2j} \vert\mathbb{T}^2\vert
\end{equation}
So, 
\begin{equation}\label{hjW12}\int_{\mathbb{T}^2}|\nabla h_j|^2 \rightarrow 0\end{equation}
as $j \rightarrow \infty$. 

 Applying the Poincar\'e-Wirtinger inequality with constant $C_{\mathbb{T}^2}$ from $\mathbb{T}^2$, we find that
\begin{equation}
\Vert h_j - \bar{h}_j \Vert_{L^2(\mathbb{T}^2)}^2 =  \int_{\mathbb{T}^2} |h_j - \bar{h}_j|^2 \, dA \leq C_{\mathbb{T}^2}^2\int_{\mathbb{T}^2}|\nabla h_j|^2 \, dA \to 0, \quad \text{as $j \rightarrow \infty$},
\end{equation}
thus establishing the claim and finishing the proof. 
\end{proof}
After \eqref{average h cannot get arbitrarily large} we are naturally inclined to investigate whether the averages $\bar{h}_j$ can get arbitrarily small as well. In order to argue that this does not happen we will show in Lemma \ref{f_j convergence} that there is subsequential $W^{1,1}$ convergence of $f_j$ to its average and moreover subsequential $W^{1,2}$ convergence of $h_j$ to its average on a subsequence with an accompanying lower bound on $\bar{h}_j$. 

\subsection{$W^{1,1}$ Convergence of $f_j$} \label{subsec: Convergence of f_j Case 2}

Our goal in this section is to show $W^{1,1}$ convergence of $f_j$. Before we proceed with that goal we will first establish a uniform $L^p$ bound, $p>2$, for the sequence.

\begin{proposition}\label{LinfinityEsthj}
    Let $M_j = (\mathbb{T}^3, g_j)$ as in (\ref{Case2}). Let $R_j \geq -\frac{1}{j}$, $\Vol(M_j)\leq V_0$, and $\MinA(M_j) \geq A_0>0$. Then for each fixed $p>2$ there exists a $C_p>0$ so that
    \begin{align}
        \int_{\mathbb{T}^2} f_j^p \le C_p.
    \end{align}
\end{proposition}
\begin{proof}
  Let $h_j=\ln(f_j)$. By the Moser-Trudinger inequality on a compact Riemann surface (see~\cite[Theorem 1.1(1)]{yang2007sharp} with $\alpha = 0$), we find
\begin{align}\label{equation Moser-Trudinger}
    \int_{[-\pi,\pi]^2} e^{\frac{(h_j-\bar{h}_j)^2}{c^2\|\nabla h_j\|^2_{L^2([-\pi,\pi]^2)}}}dxdy \le 4\pi^2.
\end{align}
Since $c\|\nabla h_j\|_{L^2([-\pi,\pi]^2)} \le c_j$ where $c_j$ is decreasing so that $c_j \rightarrow 0$ as $j \rightarrow \infty$, we can rewrite ~\eqref{equation Moser-Trudinger} as
\begin{align}
    \int_{[-\pi,\pi]^2} e^{\frac{(\ln(f_j)-\overline{\ln(f_j)})^2}{c_j^2}}dxdy = \int_{[-\pi,\pi]^2} e^{\frac{\left(\ln(f_j\bar{f}_j^{-1})\right)^2}{c_j^2}}dxdy \le 4\pi^2.
\end{align}
If we specifically look at the set $\{|\ln(f_j /\overline{f_j})|\ge 1\}$ then we find
\begin{align}
    \int_{\{|\ln\left(f_j\bar{f}_j^{-1}\right)|\ge 1\}} e^{\frac{\left|\ln\left(f_j\bar{f}_j^{-1}\right)\right|}{c_j}}dxdy \le  \int_{\{|\ln\left(f_j\bar{f}_j^{-1}\right)|\ge 1\}} e^{\frac{\left(\ln(f_j\bar{f}_j^{-1})\right)^2}{c_j^2}}dxdy \le 4\pi^2,
\end{align}
and if we further restrict to the set where $\{\ln(f_j\bar{f}_j^{-1}) \ge 1\}$ we find
\begin{align}
   \int_{\{\ln(f_j\bar{f}_j^{-1})\ge 1\}} (f_j\bar{f}_j^{-1})^{\frac{1}{c_j}}dxdy = \int_{\{\ln(f_j\bar{f}_j^{-1})\ge 1\}} e^{\frac{\ln(f_j\bar{f}_j^{-1})}{c_j}}dxdy  \le 4\pi^2,
\end{align}
which implies
\begin{align}
  \left( \int_{\{\ln(f_j\bar{f}_j^{-1})\ge 1\}} (f_j\bar{f}_j^{-1})^{\frac{1}{c_j}}dxdy \right)^{c_j}  \le (4\pi)^{2c_j}.
\end{align}
Now notice that where $-\infty<\ln(f_j\bar{f}_j^{-1})\le 1$ we know $0 < f_j\bar{f}_j^{-1} \le e$ and hence 
\begin{align}
  \left( \int_{\{-\infty<\ln(f_jf_j\bar{f}_j^{-1})\le 1\}} (f_j\bar{f}_j^{-1})^{\frac{1}{c_j}}dxdy \right)^{c_j}\le \left(\int_{[-\pi,\pi]^2} (e)^{\frac{1}{c_j}}dxdy \right)^{c_j} \le (4\pi)^{2c_j}e,
\end{align}
which implies 
\begin{align}
 \|f_j\bar{f}_j^{-1}\|_{L^{c_j^{-1}}([-\pi,\pi]^2)}= \left( \int_{[0,\pi]^2} (f_j\bar{f}_j^{-1})^{\frac{1}{c_j}}dxdy \right)^{c_j}  \le (e+1)(4\pi)^{2c_j}\le (e+1)(4\pi)^{2c_1}.
\end{align}
This is equivalent to 
\begin{align}
 \|f_j\|_{L^{c_j^{-1}}([-\pi,\pi]^2)}  \le \bar{f}_j(e+1)(4\pi)^{2c_j}\le \bar{f}_j(e+1)(4\pi)^{2c_1}.
\end{align}
  Since $ \int_{[-\pi,\pi]^2}f_jdxdy = \frac{\Vol(M_j)}{2\pi} \le \frac{V_0}{2\pi}$, $\bar{f}_j \le \frac{V_0}{8\pi^3}$, and $c_j^{-1} \rightarrow \infty$ as $j \rightarrow \infty$ the result follows.
\end{proof}

We now use this newly found control on $h_j$ and $f_j$ to find $W^{1,1}$ convergence of $f_j$ and $W^{1,2}$ convergence of $h_j$.

\begin{lemma}\label{f_j convergence} Let $M_j = (\mathbb{T}^3, g_j)$ as in (\ref{Case2}). Let $R_j \geq -\frac{1}{j}$, $\Vol(M_j)\leq V_0$, and $\MinA(M_j) \geq A_0>0$. Then, for some constant $f_{\infty} \in (0,\infty)$ and some subsequence $f_{j_k}$, $f_{j_k} \to f_{\infty}\in (0,\infty)$ in $W^{1,1}$. Similarly, if $h_j := ln(f_j)$, then for some subsequence and some constant $h_{\infty} \in \R$, $h_{j_k} \to h_{\infty}$ in $W^{1,2}$.
\end{lemma}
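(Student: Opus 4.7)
The plan is to first establish $W^{1,2}$ convergence of a subsequence of $f_j$ to a positive constant $f_\infty$, and then deduce $W^{1,2}$ convergence of the corresponding $h_{j_k}$ to $h_\infty := \ln f_\infty$ as a formal consequence of two pointwise almost-everywhere limits.

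For the first half, observe that $\nabla f_j = f_j\, \nabla h_j$, so by the uniform $L^\infty$-bound $f_j \leq e^C$ from Proposition \ref{LinfinityEsthj} and \eqref{hjW12},
\[ \|\nabla f_j\|_{L^2(\mathbb{T}^2)} \leq e^C \|\nabla h_j\|_{L^2(\mathbb{T}^2)} \longrightarrow 0. \]
Denoting by $\bar{f}_j$ the average of $f_j$ over $\mathbb{T}^2$, the Poincaré-Wirtinger inequality gives $\|f_j - \bar{f}_j\|_{L^2} \to 0$. Lemma \ref{bound from minA singly warped} yields $\bar{f}_j \geq A_0/(4\pi^2)$, and together with the upper bound this places $\bar{f}_j$ in the compact interval $[A_0/(4\pi^2), e^C]$. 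Extracting a subsequence $\bar{f}_{j_k} \to f_\infty \in [A_0/(4\pi^2), e^C]$, one obtains $f_{j_k} \to f_\infty$ in $L^2(\mathbb{T}^2)$, and since $\nabla f_\infty \equiv 0$ this upgrades to $W^{1,2}(\mathbb{T}^2)$ convergence with $f_\infty > 0$ as desired.

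For the second half, set $h_\infty := \ln f_\infty$. Pass to a further subsequence (not relabeled) along which $f_{j_k} \to f_\infty$ pointwise a.e., so that $h_{j_k} = \ln f_{j_k} \to h_\infty$ a.e. The earlier Poincaré-Wirtinger consequence of \eqref{hjW12} gives $\|h_{j_k} - \bar{h}_{j_k}\|_{L^2} \to 0$, and after a final subsequence extraction we also have $h_{j_k}(x) - \bar{h}_{j_k} \to 0$ for a.e.\ $x$. At any $x$ in the intersection of these two full-measure sets,
\[ \bar{h}_{j_k} = h_{j_k}(x) - \bigl(h_{j_k}(x) - \bar{h}_{j_k}\bigr) \longrightarrow h_\infty - 0 = h_\infty, \]
so $\bar{h}_{j_k} \to h_\infty$ as a sequence of real numbers. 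Therefore
\[ \|h_{j_k} - h_\infty\|_{L^2} \leq \|h_{j_k} - \bar{h}_{j_k}\|_{L^2} + |\bar{h}_{j_k} - h_\infty|\,|\mathbb{T}^2|^{1/2} \longrightarrow 0, \]
and combined with $\|\nabla h_{j_k}\|_{L^2} \to 0 = \|\nabla h_\infty\|_{L^2}$ this yields $h_{j_k} \to h_\infty$ in $W^{1,2}(\mathbb{T}^2)$.

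The main potential obstacle is that Proposition \ref{LinfinityEsthj} only provides an upper bound on $h_j$, so \emph{a priori} $\bar{h}_j$ might drift to $-\infty$ and no constant limit $h_\infty$ would exist. The approach above sidesteps this by first exploiting the MinA lower bound to produce a strictly positive limit $f_\infty$ for the averages $\bar{f}_j$, and only then recovering the boundedness of $\bar{h}_{j_k}$ (and its convergence to $\ln f_\infty$) by pairing the a.e.\ limit of $h_{j_k}$ with the a.e.\ limit of the oscillation $h_{j_k} - \bar{h}_{j_k}$. This avoids any appeal to Moser-Trudinger-type exponential integrability of $h_j - \bar{h}_j$, which would otherwise be a natural but heavier tool for bounding $\bar{h}_j$ from below directly.
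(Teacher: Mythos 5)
Your argument for the $W^{1,2}$ convergence of $f_{j_k}$ is essentially the paper's: use $\nabla f_j = f_j\nabla h_j$ together with the uniform upper bound $f_j \le e^C$ from Proposition~\ref{LinfinityEsthj} and \eqref{hjW12} to force $\|\nabla f_j\|_{L^2}\to 0$, then Poincar\'e--Wirtinger to control $f_j - \bar f_j$, and Lemma~\ref{bound from minA singly warped} to keep $\bar f_j$ away from $0$, extracting a convergent subsequence of averages. The only cosmetic difference is that you bound $\bar f_j$ from above by $e^C$ rather than by the volume bound as the paper does; both are immediate.

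Where you add genuine value is in the second half, for $h_{j_k}$. The paper dispatches it with a single ``Similarly,'' which is not literally a repetition of the $f_j$ argument: one would need a lower bound on $\bar h_j$, which is exactly the thing the paper flags as nonobvious just before the lemma. Your route --- pass to a further subsequence along which $f_{j_k}\to f_\infty>0$ and $h_{j_k}-\bar h_{j_k}\to 0$ both hold pointwise a.e., then read off $\bar h_{j_k}\to \ln f_\infty$ at a single such point and finish by the triangle inequality in $L^2$ --- is clean, avoids any exponential-integrability machinery, and makes the ``Similarly'' precise. I see no gaps; this is a correct proof, and a sharper write-up of the $h_j$ step than the paper gives.
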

\begin{proof}
Let $h_j := \ln(f_j)$. By (\ref{hjW12}),
\begin{equation}\label{equation W12 for h}
\int_{\mathbb{T}^2}\frac{|\nabla f_j|^2}{f_j^2}\, dA = \int_{\mathbb{T}^2}|\nabla h_j|^2 \, dA\to 0, \quad \text{as $j \rightarrow \infty$}
\end{equation}
Now we calculate
\begin{align}
\int_{\mathbb{T}^2}|\nabla f_j| \, dA&=\int_{\mathbb{T}^2}\frac{|\nabla f_j|}{f_j} f_j \, dA 
\\&\leq \left(\int_{\mathbb{T}^2}\frac{|\nabla f_j|^2}{f_j^2} \, dA \right)^{1/2} \left (\int_{\mathbb{T}^2}f_j^2dA\right)^{1/2}
\\&\le C_0 \left (\int_{\mathbb{T}^2}\frac{|\nabla f_j|^2}{f_j^2}\, dA \right)\rightarrow 0
\end{align}
Where the $L^2$ upper bound on $f_j$ comes from Proposition \ref{LinfinityEsthj}.

Then by Lemma \ref{bound from minA singly warped} combined with the the uniform bound on $\|f_j\|_{L^1} = \Vol(M_j) \leq V_0$, we have that $\bar{f}_j = \frac{1}{|\mathbb{T}^2|}\int_{\mathbb{T}^2}f_jdA$ is uniformly bounded above and below by positive constants and so some subsequence $\bar{f}_{j_k}$ converges to a constant $\bar{f}_{\infty}$. Then, by using the Poincar\'e inequality we find
 \begin{equation}
 \int_{\mathbb{T}^2}|\nabla f_{j_k}|\, dA\ge  \int_{\mathbb{T}^2}|f_{j_k}-\bar{f}_{j_k}| \, dA 
 \end{equation}
which gives the convergence of $f_{j_k} \to \bar{f}_{\infty}\in (0,\infty)$ in $L^1$. Since $\nabla \bar{f}_{\infty} \equiv 0$, we in fact have that $f_{j_k} \to \bar{f}_{\infty}$ in $W^{1,1}$. After relabelling $\bar{f}_{\infty}$ by $f_{\infty}$, we conclude the first part of this lemma.

Similarly, we find that $h_{j_k} \to h_{\infty} \in \R$ in $W^{1,2}$. Since $f_{j_k} \to \bar{f}_{\infty}\in (0,\infty)$ in $L^1$, we can choose a further subsequence so that $f_{j_k} \to \bar{f}_{\infty}\in (0,\infty)$ pointwise almost everywhere. Thus, $h_{j_k} \rightarrow h_{\infty} :=\ln(\bar{f}_{\infty})$ pointwise almost everywhere. 

By Egorov's theorem, for each $\epsilon>0$, there is a measurable subset $A_{\epsilon} \subseteq \mathbb{T}^2$ such that $\Vol(A_{\epsilon})< \epsilon$ and $h_{j_k}$ converges uniformly on $\mathbb{T}^2\setminus A_{\epsilon}$. In particular, there exists $k_0>0$ such that for all $k > k_0$, $h_{j_k}>h_{\infty}-1$ everywhere on $\mathbb{T}^2\setminus A_{\epsilon}$. 

First, we note that $\sup_k \bar{h}_{j_k}< \infty$ by~\eqref{average h cannot get arbitrarily large}. Now, we will show that $\liminf_{k \to \infty} \bar{h}_{j_k}>-\infty$. Suppose not. That is, suppose that there is a further subsequence $h_{j_k}$ such that $\lim_{k\to\infty} \bar{h}_{j_k} = -\infty$. Then, in particular, there would exist $k_1 > 0$ such that $\bar{h}_{j_k} < h_{\infty}-2$ for all $k>k_1$. So, for $k>\max(k_0, k_1)$,
\begin{align}
    \int_{\mathbb{T}^2} |h_{j_k} - \bar{h}_{j_k}|^2\, dA &= \int_{\mathbb{T}^2\setminus A_{\epsilon}} |h_{j_k} - \bar{h}_{j_k}|^2\, dA +\int_{A_{\epsilon}} |h_{j_k} - \bar{h}_{j_k}|^2\, dA \\
    &\geq \int_{\mathbb{T}^2\setminus A_{\epsilon}}|h_{j_k} - \bar{h}_{j_k}|^2\, dA\\ \label{HalfWayEq} 
\end{align}
Since $k> \max(k_0, k_1)$, we have both that $h_{j_k}>h_{\infty}-1$ on $\mathbb{T}^2\setminus A_{\epsilon}$ and that $\bar{h}_{j_k} < h_{\infty}-2$, which implies that $|h_{j_k} - \bar{h}_{j_k}|>1$ on $\mathbb{T}^2\setminus A_{\epsilon}$. Hence, by continuing the estimate in \eqref{HalfWayEq} we find
\begin{align}
  \int_{\mathbb{T}^2} |h_{j_k} - \bar{h}_{j_k}|^2\, dA  &\geq \int_{\mathbb{T}^2\setminus A_{\epsilon}} 1\, dA\\
    &= \Vol(\mathbb{T}^2) - \Vol(A_{\epsilon})\\
    &= \Vol(\mathbb{T}^2) - \epsilon
\end{align}
Since $\epsilon$ was arbitrary, we find a contradiction with Proposition \ref{proposition hj L2 convergence to average}. Thus, 
$$\liminf_{k \to \infty} \bar{h}_{j_k}>-\infty.$$
Thus, we find that there exists a $C>0$ so that $-\infty < -C\le\bar{h}_{j_k}\le C< \infty$. Combined with Proposition \ref{proposition hj L2 convergence to average}, we have that $h_{j_k} \to h_{\infty} \in \mathbb{R}$ in $L^2$ along a subsequence. Combined with~\eqref{equation W12 for h}, we find that $h_{j}$ subsequentially converges to $h_{\infty} \in \mathbb{R}$ in $W^{1,2}$.
\end{proof}

\subsection{$C^0$ Convergence from Below}\label{subsec:C0 From Below Case 2}
Now, we have from Lemma \ref{f_j convergence} that on some subsequence, $f_j$ converges in $W^{1,1}$ to a positive constant. We would like to use this to show convergence of $M_j$, as in (\ref{Case2}), to a flat torus. It was shown in \cite{BALp} that if the warping functions are bounded in $L^p$ for $p>2$ (or equivalently the metrics bounded in $L^\frac{p}{2}$), volume converges, and the distance function is $C^0$ converging from below, then the sequence is converging in the uniform, GH, and SWIF sense. We will first show the $C^0$ convergence from below by using a maximum principle argument on the operator $L f = \Delta f + |\nabla f|^2$. By the inequality in equation \eqref{NiceEllipticEq} we expect to be able to bound the minimum of $h_j$ using the maximum principle as we now proceed to do.  
\begin{lemma}\label{minControl}
Let $M_j = (\mathbb{T}^3, g_j)$ as in (\ref{Case2}). Let $R_j \geq -\frac{1}{j}$. Let $h_j := \ln(f_j)$. Then, for $\Omega=[\eta_1,\eta_2]\times S^1\subset \mathbb{T}^2 = -[\pi,\pi]\times [-\pi,\pi]$, we have
\begin{equation}
\min_{\Omega} h_j \ge \min_{\partial \Omega} h_j - (e^{\gamma_j \eta_2} - e^{\gamma_j \eta_1})
\end{equation}
where $\gamma_j=\sqrt{\frac{C}{2j}}$.
\end{lemma}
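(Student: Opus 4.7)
The plan is to exploit the nonlinear elliptic inequality $\Delta h_j + |\nabla h_j|^2 \le 1/(2j)$ from \eqref{NiceEllipticEq} by comparing $h_j$ with an explicit one-variable barrier, using a maximum principle adapted to the nonlinear operator $Lu := \Delta u + |\nabla u|^2$. Since $Lh_j \le 1/(2j)$ on $\Omega$, any smooth function $\psi$ satisfying the reverse inequality $L\psi \ge 1/(2j)$ together with $\psi \le h_j$ on $\partial\Omega$ should be forced to lie below $h_j$ in the interior, and this will yield the claimed bound once the minimum of $\psi$ is computed explicitly.

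As the barrier I would take
\[
\psi(x) := \min_{\partial\Omega} h_j + e^{\gamma_j x} - e^{\gamma_j \eta_2},
\]
with $\gamma_j = \sqrt{C/(2j)}$ for an absolute constant $C$ to be fixed. The direct computation
\[
L\psi = \psi''(x) + (\psi'(x))^2 = \gamma_j^2 e^{\gamma_j x}\bigl(1 + e^{\gamma_j x}\bigr)
\]
reduces the inequality $L\psi \ge 1/(2j)$ to $C \cdot e^{\gamma_j x}\bigl(1 + e^{\gamma_j x}\bigr) \ge 1$, which holds uniformly for $x \in [\eta_1, \eta_2] \subset [-\pi, \pi]$ once $C$ is chosen large enough (independently of $j$), since the factor $e^{\gamma_j x}(1 + e^{\gamma_j x})$ is bounded below by a positive constant depending only on $\pi$. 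The boundary comparison $\psi \le h_j$ on $\partial\Omega$ is immediate, because $e^{\gamma_j x} - e^{\gamma_j \eta_2} \le 0$ for $x \in [\eta_1, \eta_2]$, so $\psi \le \min_{\partial\Omega} h_j \le h_j$ on $\partial\Omega$.

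The next step is to linearize the nonlinear comparison. Setting $w := h_j - \psi$, the pointwise identity
\[
\Delta w + (\nabla h_j + \nabla\psi) \cdot \nabla w = L h_j - L\psi \le \frac{1}{2j} - \frac{1}{2j} = 0
\]
shows that $w$ is a supersolution of a linear elliptic operator $\widetilde L := \Delta + b \cdot \nabla$ on $\Omega$, with bounded drift $b := \nabla h_j + \nabla\psi$ and no zeroth-order term. The weak maximum principle (e.g.\ Gilbarg--Trudinger Theorem 3.1), applied on the cylinder $\Omega = [\eta_1, \eta_2] \times S^1$ with boundary $\{\eta_1, \eta_2\} \times S^1$, then gives $\min_\Omega w = \min_{\partial\Omega} w \ge 0$, so $h_j \ge \psi$ pointwise on $\Omega$. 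Minimizing $\psi$ in $x$ (it is increasing, so the minimum is at $x = \eta_1$),
\[
\min_\Omega h_j \ge \min_{x \in [\eta_1, \eta_2]} \psi(x) = \psi(\eta_1) = \min_{\partial\Omega} h_j - \bigl(e^{\gamma_j \eta_2} - e^{\gamma_j \eta_1}\bigr),
\]
which is precisely the stated bound.

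The main point requiring care is calibrating the absolute constant $C$ in $\gamma_j = \sqrt{C/(2j)}$ so that $L\psi \ge 1/(2j)$ holds uniformly on every admissible subinterval $[\eta_1, \eta_2] \subset [-\pi, \pi]$; the reason an exponential barrier (rather than a linear or polynomial one) is necessary is that it matches the quadratic gradient term $|\nabla \psi|^2$ against the Laplacian $\Delta \psi$ at the correct order in $\gamma_j$, reproducing the scale $\gamma_j^2 \sim 1/j$ of the right-hand side. Once that calibration is made the comparison is automatic, and the rest of the argument is the elementary computation of $\min \psi$ at the endpoint $\eta_1$.
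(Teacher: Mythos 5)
Your proof is correct and follows essentially the same approach as the paper's: both use the exponential barrier $e^{\gamma_j x}$ with $\gamma_j=\sqrt{C/(2j)}$ calibrated so that its $L$-value dominates $1/(2j)$, and both apply the minimum principle to the resulting drift inequality $\Delta w + (\nabla h_j + \nabla \psi)\cdot\nabla w \le 0$. The only cosmetic difference is that you make the linearization $|\nabla h_j|^2-|\nabla\psi|^2=\nabla w\cdot(\nabla h_j+\nabla\psi)$ explicit before invoking the weak maximum principle, whereas the paper keeps the quadratic gradient term in the comparison inequality and then appeals to the minimum principle directly.
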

\begin{proof}
Consider the function $h_j -e^{\gamma_j \theta_1}$, $\theta_1 \in [\eta_1,\eta_2]$, $\gamma_j > 0$, and compute
\begin{align}
L(h_j -e^{\gamma_j \theta_1}) &= \Delta(h_j -e^{\gamma_j \theta_1}) + |\nabla( h_j-e^{\gamma_j \theta_1})|^2
\\&=\Delta(h_j -e^{\gamma_j \theta_1}) + |\nabla h_j|^2 -2\langle \nabla h_j,\nabla e^{\gamma_j \theta_1}\rangle +|\nabla e^{\gamma_j \theta_1}|^2
\\&=L(h_j) -2\langle \nabla( h_j-e^{\gamma_j \theta_1}),\nabla e^{\gamma_j \theta_1}\rangle - |\nabla e^{\gamma_j \theta_1}|^2 -\Delta e^{\gamma_j \theta_1}.
\end{align}
Thus, we obtain the identity 
\begin{align}
L(h_j -e^{\gamma_j \theta_1})&+2\langle \nabla( h_j-e^{\gamma_j \theta_1}),\nabla e^{\gamma_j \theta_1}\rangle =L(h_j)  - |\nabla e^{\gamma_j \theta_1}|^2 -\Delta e^{\gamma_j \theta_1},
\end{align}
whose right-hand side can be bounded as follows, using (\ref{NiceEllipticEq}), 
\begin{align}
L(h_j)  - |\nabla e^{\gamma_j \theta_1}|^2 -\Delta e^{\gamma_j \theta_1} \le \frac{1}{2j} -\gamma_j^2(e^{2\gamma_j \theta_1}+e^{\gamma_j \theta_1}) \le \frac{1}{2j} -\gamma_j^2 C' \leq 0,
\end{align}
where we uniformly bound the exponential terms independent of $j$ and choose $\gamma_j = \sqrt{\frac{C}{2j}}$ for some $C$ independent of $j$ so that the last inequality holds. Then, by the minimum principle, we know that the minimum must be obtained on the boundary, i.e.
\begin{align}
\min_{\Omega} h_j - e^{\gamma_j \eta_1} \ge \min_{\Omega} \left(h_j- e^{\gamma_j \theta_1}\right) \ge \min_{\partial \Omega} \left(h_j- e^{\gamma_j \theta_1}\right)\ge \min_{\partial \Omega} h_j - e^{\gamma_j \eta_2}.
\end{align}
\end{proof} 

Now in order to effectively use Lemma \ref{minControl} we must be able to control $h_j$ on $\partial \Omega$ and so now we obtain this control for a subsequence.
\begin{lemma}\label{C0S1}
If $h_j \rightarrow h_{\infty}$ in $W^{1,2}(\mathbb{T}^2)$ and if $h^{\bar{y}}_j(x) := h_j(x,\bar{y})$ for $\bar{y} \in [-\pi,\pi]$, then for some subsequence, $h^{\bar{y}}_{j_k}(x) \rightarrow h_{\infty}$ in $C^0([-\pi,\pi])$, for almost every $\bar{y} \in [-\pi,\pi]$.
\end{lemma}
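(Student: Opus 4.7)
The plan is to reduce uniform convergence on almost every horizontal slice to $L^1$-smallness of two scalar functions on $[-\pi,\pi]$, namely the slicewise tangential energy of $h_j$ and the slicewise mean minus the limit. First I would use that $h_\infty$ is a constant (this is how it is produced in Lemma \ref{f_j convergence}), so $W^{1,2}(\mathbb{T}^2)$-convergence of $h_j$ to $h_\infty$ is equivalent to the two separate facts
\begin{equation*}
\|h_j - h_\infty\|_{L^2(\mathbb{T}^2)} \to 0 \quad \text{and} \quad \|\nabla h_j\|_{L^2(\mathbb{T}^2)} \to 0.
\end{equation*}
For each fixed $\bar y$, the slice $h_j^{\bar y}$ lies in $W^{1,2}(S^1)$, and the one-dimensional Sobolev--Poincar\'e (Morrey) inequality on the circle gives
\begin{equation*}
\bigl\|h_j^{\bar y} - c_j(\bar y)\bigr\|_{C^0([-\pi,\pi])} \leq C \bigl\|\partial_x h_j(\cdot,\bar y)\bigr\|_{L^2(S^1)},
\end{equation*}
where $c_j(\bar y) := \frac{1}{2\pi}\int_{-\pi}^\pi h_j(x,\bar y)\, dx$ and $C$ is a universal constant.

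Next I would introduce the two nonnegative functions on $[-\pi,\pi]$,
\begin{equation*}
F_j(\bar y) := \int_{-\pi}^\pi |\partial_x h_j(x,\bar y)|^2\, dx, \qquad G_j(\bar y) := |c_j(\bar y) - h_\infty|,
\end{equation*}
and invoke Fubini together with Cauchy--Schwarz to obtain
\begin{equation*}
\int_{-\pi}^\pi F_j \, d\bar y \leq \|\nabla h_j\|_{L^2(\mathbb{T}^2)}^2 \to 0, \qquad \int_{-\pi}^\pi G_j^2 \, d\bar y \leq \tfrac{1}{2\pi}\|h_j - h_\infty\|_{L^2(\mathbb{T}^2)}^2 \to 0.
\end{equation*}
Hence $F_j \to 0$ in $L^1([-\pi,\pi])$ and $G_j \to 0$ in $L^2([-\pi,\pi])$.

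Finally, the standard fact that any $L^1$-convergent sequence admits a subsequence converging pointwise almost everywhere lets me first pass to a subsequence along which $F_{j_k}(\bar y) \to 0$ for a.e.\ $\bar y$, and then extract a further (diagonal) subsequence, which I still call $j_k$, along which in addition $G_{j_k}(\bar y) \to 0$ for a.e.\ $\bar y$. For every $\bar y$ at which both hold, the triangle inequality and the Morrey estimate above yield
\begin{equation*}
\|h_{j_k}^{\bar y} - h_\infty\|_{C^0} \leq \bigl\|h_{j_k}^{\bar y} - c_{j_k}(\bar y)\bigr\|_{C^0} + G_{j_k}(\bar y) \leq C\sqrt{F_{j_k}(\bar y)} + G_{j_k}(\bar y) \longrightarrow 0,
\end{equation*}
which is the desired conclusion. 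No real obstacle is expected; the only delicate point is the diagonal extraction, so that a \emph{single} subsequence $j_k$ works for almost every $\bar y$ simultaneously, which is where the proof genuinely uses that both quantities are controlled by integrated (not pointwise in $\bar y$) norms on $\mathbb{T}^2$.
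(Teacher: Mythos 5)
Your argument is correct and follows essentially the same route as the paper's: apply Fubini to see that the slicewise $W^{1,2}(S^1)$-distance from $h_j^{\bar y}$ to $h_\infty$ tends to zero in $L^1(d\bar y)$, pass to a subsequence converging pointwise a.e.\ in $\bar y$, and then use the one-dimensional Sobolev--Morrey embedding to upgrade to $C^0$ on each good slice. The only cosmetic difference is that you split the slicewise control into the oscillation $F_j$ and the mean discrepancy $G_j$ and extract the a.e.\ subsequence in two steps, whereas the paper treats the full slicewise $W^{1,2}$ deficit as a single $L^1(d\bar y)$ quantity; the content and the key steps (Fubini, a.e.\ subsequence from $L^1$-smallness, Morrey) are identical.
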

\begin{proof}
Since for some subsequence, $h_{j_k} \rightarrow h_{\infty}$ in $W^{1,2}(\mathbb{T}^2)$, we know that
\begin{equation}
\int_{-\pi}^{\pi}\left(\int_{-\pi}^{\pi}|h_j-h_{\infty}|^2 +\left|\frac{\partial h_j}{\partial x}\right|^2+\left|\frac{\partial h_j}{\partial y}\right|^2 dx \right )dy \longrightarrow 0, 
\end{equation}
as $j \rightarrow \infty$, but this implies that 
\begin{equation}
\int_{-\pi}^{\pi}|h^{\bar{y}}_{j_k} - h_{\infty}|^2 + \left|\frac{\partial h^{\bar{y}}_{j_k}}{\partial x}\right|^2 dx  \longrightarrow 0
\end{equation}
for a.e. $\bar{y} \in [-\pi,\pi]$, as $k \rightarrow \infty$. This means that $h^{\bar{y}}_{j_k} \rightarrow h_{\infty}$ in $W^{1,2}([-\pi,\pi])$ and so, by Morrey's inequality, we find that $h^{\bar{y}}_{j_k} \rightarrow h_{\infty}$ in $C^0$, for almost every $\bar{y} \in [-\pi,\pi]$, as desired.
\end{proof}

By combining Lemma \ref{minControl} with Lemma \ref{C0S1} we obtain $C^0$ control from below.

\begin{corollary}\label{C0Boundh}
Let $M_j = (\mathbb{T}^3, g_j)$ as in (\ref{Case2}). Let $R_j \geq -\frac{1}{j}$, $\Vol(M_j)\leq V_0$, and $\MinA(M_j) \geq A_0>0$. Let $h_j := \ln(f_j)$. Then, after passing to a subsequence, we have the inequality
\begin{align} \label{bound h}
h_{j_k} \ge h_{\infty} - \frac{C}{k}
\end{align}
on $\mathbb{T}^2$, from which we deduce 
\begin{align}
f_{j_k} \ge f_{\infty} - \frac{\bar{C}}{k},
\end{align}
again on $\mathbb{T}^2$.
\end{corollary}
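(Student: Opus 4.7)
The plan is to transport the uniform convergence $h_j \to h_\infty$ available on almost every coordinate circle (Lemma~\ref{C0S1}) to the whole torus by means of the maximum principle of Lemma~\ref{minControl}; Lemma~\ref{f_j convergence} feeds the initial $W^{1,2}$ convergence into both inputs.

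Starting from the $W^{1,2}$-convergent subsequence given by Lemma~\ref{f_j convergence}, I apply Lemma~\ref{C0S1} to pass to a further subsequence (still denoted $h_j$) along which, for almost every $\bar y \in [-\pi,\pi]$, the slice $x \mapsto h_j(x,\bar y)$ converges uniformly on $[-\pi,\pi]$ to the constant $h_\infty$. I pick two such good values $\bar y_1 < \bar y_2$ in $(-\pi,\pi)$, so that the circles $\Gamma := (S^1 \times \{\bar y_1\}) \cup (S^1 \times \{\bar y_2\})$ split $\mathbb{T}^2$ into two strips
\begin{equation}
\Omega_1 := S^1 \times [\bar y_1, \bar y_2], \qquad \Omega_2 := \mathbb{T}^2 \setminus \Omega_1 \;\cong\; S^1 \times [\bar y_2, \bar y_1 + 2\pi],
\end{equation}
where the second identification uses a translation of the $y$-coordinate on $S^1$. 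Because the operator $Lf = \Delta f + |\nabla f|^2$ and the inequality~\eqref{NiceEllipticEq} are invariant under translations of $\mathbb{T}^2$ and under interchange of $x$ and $y$, the proof of Lemma~\ref{minControl} applies verbatim to strips of the form $S^1 \times [\eta_1,\eta_2]$, giving on each $\Omega_i$ an estimate $\min_{\Omega_i} h_j \geq \min_\Gamma h_j - E_{i,j}$ with each $E_{i,j}$ of the form $e^{\gamma_j \eta_2^{(i)}} - e^{\gamma_j \eta_1^{(i)}} = O(\gamma_j) = O(j^{-1/2})$ as $j \to \infty$.

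Combining the two estimates gives $\min_{\mathbb{T}^2} h_j \geq \min_\Gamma h_j - O(j^{-1/2})$; since $h_j|_\Gamma \to h_\infty$ uniformly by the choice of $\bar y_1, \bar y_2$, this yields $\liminf_{j \to \infty} \min_{\mathbb{T}^2} h_j \geq h_\infty$. Extracting a sufficiently sparse further subsequence and relabelling the index by $k$, I can arrange $h_\infty - \min_{\mathbb{T}^2} h_{j_k} \leq C/k$, which is the claimed inequality $h_{j_k} \geq h_\infty - C/k$ on $\mathbb{T}^2$. Exponentiating and using $e^{-C/k} \geq 1 - C/k$ together with $f_\infty = e^{h_\infty} < \infty$ from Lemma~\ref{f_j convergence} then yields
\begin{equation}
f_{j_k} = e^{h_{j_k}} \;\geq\; f_\infty\, e^{-C/k} \;\geq\; f_\infty - \bar C/k, \qquad \bar C := C f_\infty.
\end{equation}

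The main obstacle is purely bookkeeping: the direction in which Lemma~\ref{C0S1} supplies uniform control (slices at fixed $y$) must be matched to the direction in which Lemma~\ref{minControl} propagates boundary data. This is resolved by invoking the manifest $x \leftrightarrow y$ symmetry of the maximum-principle argument and by using two good slicing values so that both halves of $\mathbb{T}^2$ carry nontrivial boundary circles on which $h_j$ is known to converge uniformly. The wrap-around strip $\Omega_2$ requires nothing beyond translation-invariance of~\eqref{NiceEllipticEq}, under which the maximum-principle proof is unchanged.
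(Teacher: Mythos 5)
Your proof is correct and follows essentially the same route as the paper's: pass to a subsequence via Lemma~\ref{f_j convergence}, use Lemma~\ref{C0S1} to pick two good boundary circles, run the barrier/maximum-principle estimate of Lemma~\ref{minControl} on the two complementary strips, and exponentiate. You are slightly more explicit than the paper about the $x\leftrightarrow y$ relabelling needed to match the slicing direction in Lemma~\ref{C0S1} with the strip direction in Lemma~\ref{minControl}, and about extracting the sparse subsequence that converts $\liminf \min h_j \ge h_\infty$ into the quantitative $h_\infty - C/k$ bound, but these are the same ideas the paper leaves implicit.
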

\begin{proof}
We may apply Lemma \ref{f_j convergence}, which allows us to apply Lemma \ref{C0S1}. So, we know that if we define $h^{\bar{y}}_j(x) = h_j(x,\bar{y})$, for $\bar{y} \in [-\pi,\pi]$, we find that $h^{\bar{y}}_{j_k}(x) \rightarrow h_{\infty}$ in $C^0([-\pi,\pi])$, for almost every $\bar{y} \in [-\pi,\pi]$. We can pick a $\eta_1,\eta_2 \in [-\pi,\pi]$ so that we get the desired $C^0$ convergence on $S^1 \times \{\eta_1\}$ and $S^1\times \{\eta_2\}$. Now we can apply Lemma \ref{minControl} on $S^1\times [\eta_1,\eta_2]$ and $S^1\times [\eta_2,\eta_1+2\pi]$ in order to achieve the desired bound (\ref{bound h}). Exponentiating both sides of (\ref{bound h}),
\begin{align}
f_k &\ge e^{\ln(f_{\infty}) - \frac{C}{k}}=
f_{\infty}e^{\frac{-C}{k}}, 
\end{align}
gives the desired bound for $f$. 
\end{proof}

\subsection{SWIF Convergence to a Flat Tori}\label{subsec:SWIF Conv Case 2}

We are now able to conclude with the proof of our main theorem. 
\begin{proof}[Proof of Theorem \ref{MainThmCase2}]
The $C^0$-bound from below given in Corollary \ref{C0Boundh} combined with the $L^p$, $p > 2$ bound of Proposition \ref{LinfinityEsthj} and the $W^{1, 1}$-convergence of Lemma \ref{f_j convergence} allows us to apply Theorem 1.4 of \cite{BALp} to obtain uniform, GH, and SWIF convergence to a flat torus on a subsequence. Note that an $L^p$ bound on the warping factor $f_j$ is equivalent to an $L^{\frac{p}{2}}$ on $g_j$ for warped products, which is the required hypothesis for Theorem 1.4 of \cite{BALp}.
\end{proof}

\bibliography{WarpedTori}
\bibliographystyle{plain}

\end{document}